\begin{document}

\newtheorem{defin}{Definition}
\newtheorem{theor}[defin]{Theorem}
\newtheorem{prop}[defin]{Proposition}
\newtheorem{lemma}[defin]{Lemma}
\newtheorem{exam}[defin]{Example}
\newtheorem*{claim}{Claim}
\title[A new representation of finite Hoops]{\bf A new representation of finite Hoops using a new type of product of structures}
\author {Michal Botur}

\thanks{Author acknowledge the support by GA\v CR 24-14386L and by IGA, project P\v rF 2024 011 and 2025 008.} 
\address{Palack\' y University Olomouc, Faculty of Sciences,  17. listopadu 1192/12, Olomouc 771 46, Czech Republic}
\email{michal.botur@upol.cz}
\keywords{wreath product, residuated lattice}
\subjclass[2010]{Primary 06D35, Secondary 03B50}

\begin{abstract}
We define a new type of product of hoops which,
in the case of finite hoops, can decompose an arbitrary hoop $\mathbf A$ into
a filter $F$ and the corresponding homomorphic image $\mathbf A/F$.
Since our new product is associative up to
isomorphism, we can show that every finite hoop is representable
as a product of finite MV-chains.  
\end{abstract}

\maketitle

\section{Motivation}

Hoops were introduced in an unpublished manuscript by
Büchi and Owens, grounded in the work of Bosbach on partially ordered monoids
\cite{Bos}. Hoops have been thoroughly studied and we have a very good
understanding of their structure (see for example \cite{BlFef,Haj,Bo}).
This paper provides a new representation of finite hoops, whose general
idea grew out of investigations of various possibilities of wreath and
semidirect products of residuated structures.
These ideas, in their initial form, occurred to me during my
internship with Constantine Tsinakis, and it was this eminent scholar who gave the
initial impetus to this research. I was fortunate to be able to work with
Constantine, and receive support from him. This article is a token of respect and
gratitude.

Another motivation and inspiration for the notions introduced in the main part
was the deep theory of semidirect and wreath products of semigroups,
which culminates in the famous Krohn-Rhodes theorem \cite{KR}. 

A semigroup action $\mathcal A=(X,\mathbf A,\bullet)$ is a structure where
$\mathbf A=(A;\cdot,1)$ is a semigroup, $X$ is a set and 
$\bullet\colon X\times A\longrightarrow X$ is a map, called the \emph{right action},
which satisfies $x\bullet (a\cdot b)=(x\bullet a)\bullet b$, for all $x\in X$
and $a,b\in A$.
For any semigroup action $\mathcal B= (X,\mathbf B,\bullet)$ and
any semigroup $\mathbf A=(A;\cdot)$ we can define the following semigroup
$$
\mathbf A\wr\mathcal B = (A^X\times B,\cdot)
$$
where
$$
(\overline a_1,b_1)\cdot (\overline a_2,b_2)=(\overline a_1\cdot (\overline
a_2\asterisk b_1),b_1\cdot b_2),
$$
$$
(\overline a_2\asterisk b_1)(x) = \overline a_2(x\bullet b_1)
$$
for all  $(\overline a_1,b_1),(\overline a_2,b_2)\in A^X\times B$ and any $x\in X$. 
The semigroup $\mathbf A\wr\mathcal B$
is called a \emph{wreath product} of $\mathbf{A}$ and $\mathcal{B}$.

There is a certain asymmetry in this notion of wreath product, since $\mathbf{A}$
does not need to act on anything. But wreath product can be defined for two
semigroup actions as well. If $\mathcal A=(X,\mathbf A,\bullet)$
and $\mathcal B=(Y,\mathbf B,\bullet)$ are semigroup actions then we
define  $$\mathcal A\wr \mathcal B = (X\times Y,\mathbf 
 A\wr\mathcal B,\bullet)$$
 such that $(x,y)\bullet (\overline a,b)= (x\bullet \overline a(y),y\bullet b)$
 for any $(x,y)\in X\times Y$ and $(\overline a,b)\in A^Y\times B$.
Defined this way, wreath product can be seen as an operation in the class of all
semigroup actions.

The first (and at first sight perhaps surprising) result is associativity of
such wreath product. More precisely, if $\mathcal A=(X,\mathbf A,\bullet)$,
$\mathcal B=(Y,\mathbf B,\bullet)$ and $\mathcal C=(Z,\mathbf C,\bullet)$ are
semigroup actions then  
$$
\mathcal A\wr (\mathcal B\wr\mathcal C)\cong(\mathcal A\wr\mathcal B)\wr
\mathcal C
$$
where the isomorphism ($\cong$) naturally means the existence of a pair of
bijections between sets together with a monoid isomorphism, preserving actions.
Associativity is an important property guiding the approach taken in this work,
so let me analyse it in a little more detail. First of all, for sets on
which the semigroups act, the associativity is the usual associativity of direct
product of sets.   

For semigroups, their wreath product corresponds to the \emph{cascade
product} of automata. The second ``semigroup factor''
forms a kind of skeleton of the wreath product, and each element $x$ of
the skeleton induces an automorphism on the first semigroup. Denoting this
automorphism, in a category-theory fashion, by $-\asterisk x$, we see that
in the wreath product any such $x$ is replaced by
the ``$-\asterisk x$ transformed'' copy of the semigroup.
This is illustrated in Figure~\ref{F1}.
Associativity, as illustrated in Figure~\ref{F2}, states that 
such an cascade expansion is independent on the order in which it is
performed.

The Krohn-Rhodes theorem (and all of the subsequent theory) is central to our
motivation for introducing new products for hoops. Namely, Krohn-Rhodes theorem
shows that all monoid actions can in some sense be decomposed into simple groups
and three element flip-flop monoids (for more details we refer the reader to
\cite{KR}). Similarly we show that any finite hoop is a ``new type'' product of
MV-chains. 

We want to emphasize that the introduction of this new product for hoops is
not meant to be a mere syntactic analogy with the semigroup case.
Rather, we look for a ``product-like'' operation, which would be
at the very least associative in a natural way, and we wish to proceed
towards a representation of hoops by way of some result that would say:
\emph{every finite hoop is constructible as a new product of certain
new-product-irreducible ones}.

\vskip 10pt
\begin{figure}
    \centering
    \includegraphics[width=0.6\linewidth]{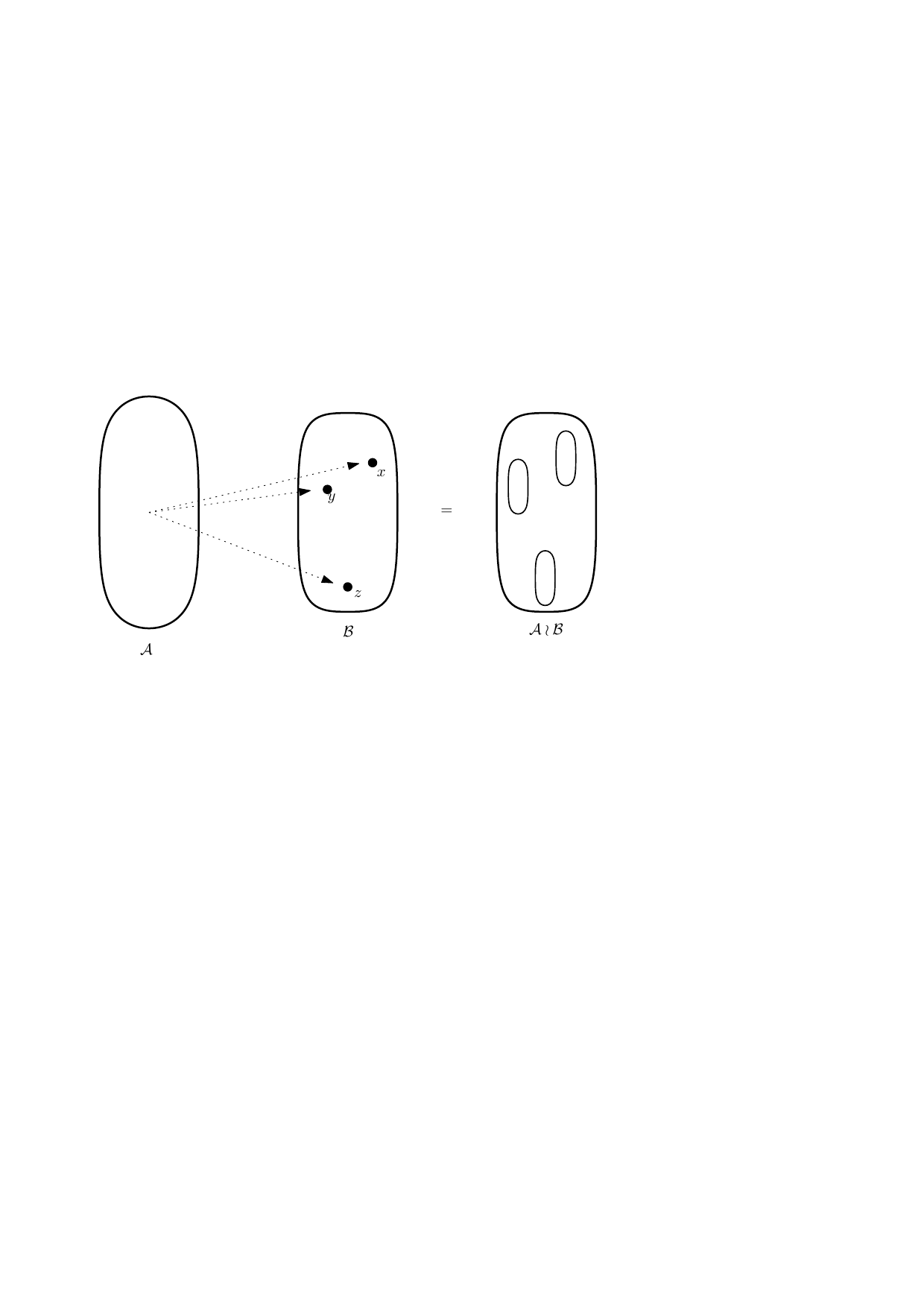}
    \caption{Idea of a wreath product of semigroup actions.}
    \label{F1}
\end{figure}
\vskip 10pt

\vskip 10pt
\begin{figure}
    \centering
    \includegraphics[width=0.9\linewidth]{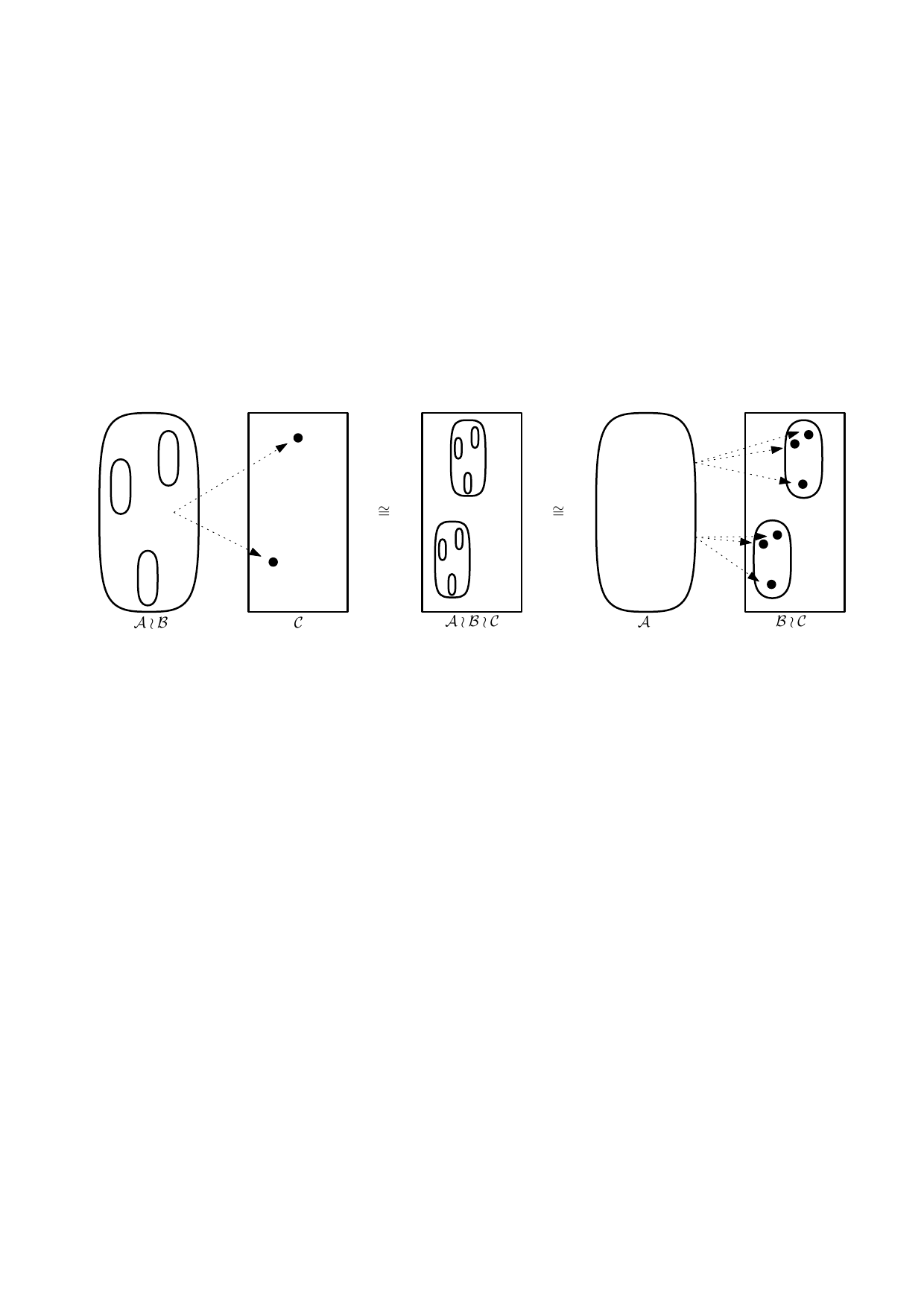}
    \caption{Associativity of a wreath product.}
    \label{F2}
\end{figure}
\vskip 10pt

For the sake of completeness, we mention that the possibility of a semidirect product similar to the one considered in this paper is also dealt with in \cite{CL,la}. However, neither of these articles focuses on associativity, which is our main motivation.

\section{Introduction}
A commutative ordered monoid $(A,\cdot,1,\leq)$ is ordered \emph{naturally} if
the order relation satisfies : $a\leq b$ if and only if exists $c\in A$ such
that $b\cdot c=a$ (for all $a,b\in A$). A \emph{hoop} can be defined as
naturally ordered commutative monoid possessing the binary operation
$\rightarrow$ such that the adjointnes property 
$$
a\cdot b\leq c \text{ if and only if } a\leq b\rightarrow c
$$ 
holds for all $a,b,c$. The usual equivalent definition that follows is more
algebraic but less clear. 
\begin{defin}
A \emph{hoop} is the algebra $\mathbf A=(A;\cdot,\rightarrow,1)$ of the type $\langle 2,2,0\rangle$, where $(A;\cdot,1)$ is a commutative monoid and satisfying the identities
\begin{itemize}
    \item[(H1)] $x\rightarrow x=1$,
    \item[(H2)] $(x\cdot y)\rightarrow z = x\rightarrow (y\rightarrow z),$
    \item[(H3)] $x\cdot (x\rightarrow y)=y\cdot (y\rightarrow x)$. 
\end{itemize}
\end{defin}

If $\mathbf A=(A;\cdot,\rightarrow,1)$ is a hoop then we introduce the relation
$\leq$ on $A$ by $x\leq y$ if $1=x\rightarrow y$ (for any $x,y\in A$). It is
well known that $(A;\leq)$ is an ordered set with the top element $1$ such that
there exists an infimum $x\wedge y=x\cdot (x\rightarrow y)$ for any $x,y\in A$.
This property is known as \emph{divisibility}; in presence of the other defining
identities it is equivalent to (H3).

\begin{lemma}[\cite{Haj}]
    If $\mathbf A=(A;\cdot,\rightarrow,1)$ is a hoop and $x,y,z\in A$ then it satisfies
    \begin{itemize}
        \item[(i)] if $x\leq y$ then $x\cdot z\leq y\cdot z,$ $z\rightarrow x\leq z\rightarrow y$ and $y\rightarrow z\leq x\rightarrow y,$
        \item [(ii)] $x\cdot y\leq z$ if and only if $x\leq y\rightarrow z,$
        \item [(iii)] $x\rightarrow (y\wedge z)=(x\rightarrow y)\wedge (x\rightarrow z),$
        \item[(iv)] if the supremum $y\vee z$ exists then also $(x\cdot y)\vee (x\cdot z)$ exists and $x\cdot (y\vee z)= (x\cdot y)\vee (x\cdot z),$
        \item[(v)] if the supremum $x\vee y$ exists then   $(x\vee y)\rightarrow z= (x\rightarrow z)\wedge (y\rightarrow z).$
     \end{itemize}
\end{lemma}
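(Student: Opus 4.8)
The plan is to establish the five claims \emph{in the order listed}, since each one is used in the proofs of the later ones, and to derive everything from (H1)--(H3), the definition $x\leq y\iff x\rightarrow y=1$, and the facts already recorded before the lemma: that $(A;\leq)$ is a partial order with top element $1$ in which every pair $\{x,y\}$ has an infimum, namely $x\wedge y=x\cdot(x\rightarrow y)$ (divisibility).

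The first and pivotal step is the adjointness (ii), and it is essentially immediate from (H2): since $(x\cdot y)\rightarrow z=x\rightarrow(y\rightarrow z)$, the three conditions $x\cdot y\leq z$, $x\rightarrow(y\rightarrow z)=1$ and $x\leq y\rightarrow z$ are pairwise equivalent by the definition of $\leq$. Once (ii) is available, the three monotonicity statements in (i) are routine residuation bookkeeping combined with divisibility. For $x\leq y\Rightarrow x\cdot z\leq y\cdot z$: the instance $y\cdot z\leq y\cdot z$ rewrites by (ii) as $y\leq z\rightarrow(y\cdot z)$, so $x\leq y\leq z\rightarrow(y\cdot z)$, which rewrites back by (ii) as $x\cdot z\leq y\cdot z$. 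For $x\leq y\Rightarrow z\rightarrow x\leq z\rightarrow y$: by (ii) it suffices to check $(z\rightarrow x)\cdot z\leq y$, and $(z\rightarrow x)\cdot z=z\wedge x\leq x\leq y$. The substantive antitonicity $x\leq y\Rightarrow y\rightarrow z\leq x\rightarrow z$ follows dually: by (ii) reduce to $(y\rightarrow z)\cdot x\leq z$, then bound $(y\rightarrow z)\cdot x\leq(y\rightarrow z)\cdot y=y\wedge z\leq z$ using the monotonicity of $\cdot$ just proved. (As literally printed the third inequality reads $y\rightarrow z\leq x\rightarrow y$, which is anyway trivial because $x\leq y$ forces $x\rightarrow y=1$.)

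With (i) and (ii) in hand the remaining three items are short. For (iii): $x\rightarrow(y\wedge z)\leq(x\rightarrow y)\wedge(x\rightarrow z)$ is immediate from monotonicity of $x\rightarrow(-)$; for the reverse, set $m=(x\rightarrow y)\wedge(x\rightarrow z)$, use (ii) to reduce $m\leq x\rightarrow(y\wedge z)$ to $m\cdot x\leq y\wedge z$, and note $m\cdot x\leq(x\rightarrow y)\cdot x=x\wedge y\leq y$ and symmetrically $m\cdot x\leq z$. For (iv), assume $y\vee z$ exists: $x\cdot(y\vee z)$ is an upper bound of $\{x\cdot y,x\cdot z\}$ by monotonicity of $\cdot$, and if $w$ is any upper bound then (ii) gives $y\leq x\rightarrow w$ and $z\leq x\rightarrow w$, hence $y\vee z\leq x\rightarrow w$, hence $x\cdot(y\vee z)\leq w$ by (ii) again --- so the supremum exists and equals $x\cdot(y\vee z)$. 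For (v), assume $x\vee y$ exists: $(x\vee y)\rightarrow z\leq(x\rightarrow z)\wedge(y\rightarrow z)$ by the antitonicity in (i); and for the reverse put $m=(x\rightarrow z)\wedge(y\rightarrow z)$, reduce $m\leq(x\vee y)\rightarrow z$ by (ii) to $m\cdot(x\vee y)\leq z$, rewrite $m\cdot(x\vee y)=(m\cdot x)\vee(m\cdot y)$ by (iv), and conclude since $m\cdot x\leq(x\rightarrow z)\cdot x=x\wedge z\leq z$ and likewise $m\cdot y\leq z$.

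I do not expect a real obstacle: every line is an application of the adjointness (ii) and of the divisibility identity $x\wedge y=x\cdot(x\rightarrow y)$. The only points needing attention are that (ii) must be proved first so that it is available everywhere; that in (iv) and (v) the hypotheses ``$y\vee z$ exists'' and ``$x\vee y$ exists'' must be kept in force, since arbitrary suprema need not exist in a hoop; and that the proof of (v) genuinely uses (iv) to turn the product $m\cdot(x\vee y)$ of a meet with a join into the join $(m\cdot x)\vee(m\cdot y)$ of two products.
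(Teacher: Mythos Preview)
Your argument is correct in every part: proving adjointness (ii) directly from (H2), then deriving the monotonicity/antitonicity statements in (i) via (ii) and divisibility, and finally obtaining (iii)--(v) by the standard residuation manipulations (with (v) using (iv) to distribute the product over the join) is exactly the expected route. The paper itself does not give a proof of this lemma --- it is merely stated with a citation to H\'ajek --- so there is no alternative argument to compare against; your observation about the typo in the third inequality of (i) (it should read $y\rightarrow z\leq x\rightarrow z$) is also correct.
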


\begin{lemma}
    If $\mathbf A=(A;\cdot,\rightarrow,1)$ is a hoop then $x\cdot (y\wedge z)= (x\cdot y)\wedge (x\cdot z)$ holds for any $x,y,z\in A.$
\end{lemma}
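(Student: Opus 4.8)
The plan is to prove the two inequalities $x\cdot(y\wedge z)\le (x\cdot y)\wedge(x\cdot z)$ and $(x\cdot y)\wedge(x\cdot z)\le x\cdot(y\wedge z)$ separately. The first is immediate from monotonicity of $\cdot$ (part~(i) of the preceding lemma): since $y\wedge z\le y$ and $y\wedge z\le z$, we get $x\cdot(y\wedge z)\le x\cdot y$ and $x\cdot(y\wedge z)\le x\cdot z$, hence $x\cdot(y\wedge z)$ lies below the meet.

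The reverse inequality is the substantive one, and the obstacle is that $\cdot$ need not be cancellative, so one cannot simply ``divide out'' the leading $x$ in $(x\cdot y)\wedge(x\cdot z)\le x\cdot y$. I would write $u:=(x\cdot y)\wedge(x\cdot z)$ and $N:=x\to(x\cdot z)$, and first record that $z\le N$ (from $x\cdot z\le x\cdot z$ and adjointness, part~(ii)) and $x\cdot N=x\cdot(x\to(x\cdot z))=x\wedge(x\cdot z)=x\cdot z$ (divisibility, using $x\cdot z\le x$). Then, using that $u\le x\cdot y$, divisibility gives $u=(x\cdot y)\cdot\big((x\cdot y)\to u\big)$; part~(iii) together with (H1) turns $(x\cdot y)\to u$ into $(x\cdot y)\to(x\cdot z)$, and (H2) with commutativity rewrites this as $y\to(x\to(x\cdot z))=y\to N$, so $u=(x\cdot y)\cdot(y\to N)$. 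Since also $x\cdot(y\wedge z)=(x\cdot y)\cdot(y\to z)$ by divisibility, the whole problem reduces to showing $(x\cdot y)\cdot(y\to N)\le(x\cdot y)\cdot(y\to z)$.

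For this I would combine two facts. First, $(y\to N)\cdot(N\to z)\le y\to z$: by adjointness this is $y\cdot(y\to N)\cdot(N\to z)\le z$, and indeed $y\cdot(y\to N)\cdot(N\to z)=(y\wedge N)\cdot(N\to z)\le N\cdot(N\to z)=N\wedge z=z$, using divisibility, monotonicity and $z\le N$. Second, a small absorption principle: if $c\le d$ and $d\cdot e=d$ then $c\cdot e=c$ (one line, since $c=d\cdot(d\to c)$ and hence $c\cdot e=(d\cdot e)\cdot(d\to c)=d\cdot(d\to c)=c$). Applying the latter with $c=u$, $d=x\cdot z$ (note $u\le x\cdot z$) and $e=N\to z$ — legitimate because $z=N\cdot(N\to z)$ gives $x\cdot z=(x\cdot z)\cdot(N\to z)$ after multiplying by $x$ and using $x\cdot N=x\cdot z$ — yields $u=u\cdot(N\to z)$. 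Therefore
$$x\cdot(y\wedge z)=(x\cdot y)\cdot(y\to z)\ \ge\ (x\cdot y)\cdot(y\to N)\cdot(N\to z)=u\cdot(N\to z)=u,$$
which is the desired inequality; with the first part it gives equality. The genuinely non-routine step is this interplay: although $y\to N$ need not lie below $y\to z$, the ``defect factor'' $N\to z$ is precisely absorbed by $u$, and that is what rescues the cancellation.
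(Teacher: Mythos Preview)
Your proof is correct. It shares its opening with the paper's argument: both introduce $N:=x\to(x\cdot z)$, record $z\le N$ and $x\cdot N=x\cdot z$, and arrive at $u=(x\cdot y)\cdot(y\to N)$. The divergence is only in the last step. The paper applies (H3) once more to swap $y\cdot(y\to N)=N\cdot(N\to y)$, so that
\[
u=x\cdot N\cdot(N\to y)\le (x\cdot z)\cdot(z\to y)=x\cdot(y\wedge z),
\]
using $x\cdot N\le x\cdot z$ and $N\to y\le z\to y$ directly. Your route---the transitivity bound $(y\to N)\cdot(N\to z)\le y\to z$ together with the absorption lemma---is sound but unnecessarily elaborate; the single (H3) swap replaces all of it. So: same idea, longer finish.
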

\begin{proof}
    It is clear that $x\cdot (x\rightarrow (x\cdot z))\leq x\cdot z$ and $z\leq x\rightarrow (x\cdot z)$ implies the inequality $(x\rightarrow(x\cdot z))\rightarrow y\leq z\rightarrow y.$ Consequently we obtain
    \begin{eqnarray*}
        (x\cdot y)\wedge (x\cdot z) &=& x\cdot y\cdot ((x\cdot y)\rightarrow(x\cdot z))\\
        &=& x\cdot y\cdot (y\rightarrow (x\rightarrow(x\cdot z)))\\
        &=& x\cdot (x\rightarrow(x\cdot z))\cdot ((x\rightarrow(x\cdot z)) \rightarrow y)\\
        &\leq& x\cdot z\cdot (z\rightarrow y)\\
        &=& x\cdot (y\wedge z).
    \end{eqnarray*}
    The converse inequality is clear.
\end{proof}

The next lemma was proved in~\cite{GalTsin}.

\begin{lemma}[Galatos, Tsinakis]
    If $\mathbf A=(A;\cdot,\rightarrow,1)$ is hoop such that the induced ordered
    set $(A;\leq)$ has suprema $x\vee y$ for arbitrary $x,y\in A$ (especially if
    $\mathbf A$ is a finite hoop) then the induced lattice is distributive. 
\end{lemma}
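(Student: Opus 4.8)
The plan is to prove the distributive law in the form $x\wedge(y\vee z)=(x\wedge y)\vee(x\wedge z)$. First note that every expression here is meaningful: binary infima always exist in a hoop, being given by the divisibility formula $u\wedge v=u\cdot(u\rightarrow v)$, and binary suprema exist by the hypothesis on $\mathbf A$. Since in every lattice the inequality $(x\wedge y)\vee(x\wedge z)\le x\wedge(y\vee z)$ is automatic, and since one of the two distributive laws forces the other, it is enough to establish $x\wedge(y\vee z)\le(x\wedge y)\vee(x\wedge z)$.

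The decisive step is to apply divisibility to the element $a:=x\wedge(y\vee z)$ with respect to the factor $y\vee z$, rather than to $x$. Since $a\le y\vee z$, divisibility gives $a=(y\vee z)\wedge a=(y\vee z)\cdot r$ with $r:=(y\vee z)\rightarrow a$. Writing this as $a=r\cdot(y\vee z)$ and invoking item (iv) of the lemma of Hájek quoted above (distributivity of $\cdot$ over existing suprema), we obtain $a=(r\cdot y)\vee(r\cdot z)$.

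It remains to check $r\cdot y\le x\wedge y$ and, symmetrically, $r\cdot z\le x\wedge z$. Because $1$ is the top element of $(A;\le)$ we have $r\le 1$, so $r\cdot y\le 1\cdot y=y$ by monotonicity of multiplication (item (i) of that lemma); and since $y\le y\vee z$, the same monotonicity gives $r\cdot y\le r\cdot(y\vee z)=a\le x$. Hence $r\cdot y$ is a common lower bound of $x$ and $y$, so $r\cdot y\le x\wedge y$; the bound for $r\cdot z$ is obtained in exactly the same way. Combining, $a=(r\cdot y)\vee(r\cdot z)\le(x\wedge y)\vee(x\wedge z)$, which is what we wanted.

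I do not anticipate a genuine obstacle: once one hits on decomposing $x\wedge(y\vee z)$ via divisibility relative to $y\vee z$ and then pushing the product inside the join through item (iv), the rest is monotonicity bookkeeping. The only mild subtleties are keeping careful track of which suprema are guaranteed to exist (so that item (iv) and the final join are both legitimate) and the standard lattice-theoretic remark that a single distributive identity already implies full distributivity.
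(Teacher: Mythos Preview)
Your argument is correct. The paper itself does not give a proof of this lemma; it simply attributes the result to Galatos and Tsinakis and cites their paper, so there is no in-text proof to compare against. The route you take---factor $x\wedge(y\vee z)$ through divisibility relative to $y\vee z$, distribute the product across the join via item (iv), then bound each summand by monotonicity---is the standard direct proof in the divisible residuated setting and goes through without issues.
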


We will continue to use the distributivity of the induced lattice in hoops (if $\vee$ exists) without mentioning.

We recall a few basic notions, properties and constructions related to hoops. 
An element $x$ is called \emph{idempotent} if $x\cdot x = x$.
We denote by $\mathbf{Id}\,\mathbf A$ the set of all idempotent elements of
$\mathbf A$.   

\begin{lemma}
If $\mathbf A=(A;\cdot,\rightarrow,1)$ is a hoop, $x\in \mathbf{Id}\, \mathbf A$
and $y\in \mathbf A$ then $x\cdot y=x\wedge y$. 
\end{lemma}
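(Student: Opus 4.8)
The plan is to establish the two inequalities $x\cdot y\le x\wedge y$ and $x\wedge y\le x\cdot y$ separately. The first is routine and holds for every $x,y$; the second is where idempotency of $x$ is actually used. Throughout I rely only on monotonicity of multiplication (Lemma~(i) above), the fact that $1$ is the top element of $(A;\leq)$, and the natural-order characterization $z\le x\iff z=x\cdot d$ for some $d\in A$.

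For the easy inequality, note that $y\le 1$, so $x\cdot y\le x\cdot 1=x$ by monotonicity of $\cdot$, and symmetrically $x\cdot y\le y$; hence $x\cdot y$ is a lower bound of $\{x,y\}$, giving $x\cdot y\le x\wedge y$.

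For the reverse inequality, the key observation — and the only place idempotency enters — is that an idempotent $x$ acts as an identity on every element below it: if $z\le x$ then $z=x\cdot d$ for some $d\in A$ by the definition of the natural order, whence $x\cdot z=x\cdot x\cdot d=x\cdot d=z$. Applying this to $z=x\wedge y$, which satisfies $x\wedge y\le x$, yields $x\cdot(x\wedge y)=x\wedge y$. On the other hand $x\wedge y\le y$, so $x\cdot(x\wedge y)\le x\cdot y$ by monotonicity, and therefore $x\wedge y\le x\cdot y$. Combining the two inequalities gives $x\cdot y=x\wedge y$.

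I expect no genuine obstacle; the only point requiring care is to invoke the natural-order description of $z\le x$ at the right moment, since it is precisely this description that makes the ``local identity'' property of the idempotent $x$ work. As an alternative route to the reverse inequality one can instead use the preceding lemma on the distributivity of $\cdot$ over $\wedge$, computing $x\cdot(x\wedge y)=(x\cdot x)\wedge(x\cdot y)=x\wedge(x\cdot y)=x\cdot y$ (the last step because $x\cdot y\le x$), again combined with $x\cdot(x\wedge y)=x\wedge y$; either argument closes the proof.
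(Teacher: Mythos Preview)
Your proof is correct. The route, however, differs from the paper's: the paper argues by a single chain of equalities
\[
x\cdot y = x\wedge(x\cdot y) = (x\cdot x)\wedge(x\cdot y) = x\cdot(x\wedge y) = x\cdot x\cdot(x\rightarrow y) = x\cdot(x\rightarrow y) = x\wedge y,
\]
relying on the distributivity lemma $x\cdot(u\wedge v)=(x\cdot u)\wedge(x\cdot v)$ together with the divisibility formula $x\wedge y=x\cdot(x\rightarrow y)$. You instead split into two inequalities and, for the nontrivial one, exploit the natural-order description $z\le x\Rightarrow z=x\cdot d$ to show that an idempotent fixes everything below it, giving $x\cdot(x\wedge y)=x\wedge y$; monotonicity then finishes. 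Your argument is slightly more self-contained, as it avoids invoking the distributivity lemma; the paper's argument is a one-line computation once that lemma is in hand. Your ``alternative'' paragraph is essentially the paper's computation for the step $x\cdot(x\wedge y)=x\cdot y$, though the paper pairs it with the divisibility formula rather than with your natural-order observation.
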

\begin{proof}
We have:
    \begin{eqnarray*}
        x\cdot y &=& x\wedge (x\cdot y) = (x\cdot x)\wedge (x\cdot y)\\
        &=& x\cdot (x\wedge y) = x\cdot x\cdot (x\rightarrow y)\\
        &=& x\cdot (x\rightarrow y)= x\wedge y.
    \end{eqnarray*}
\end{proof}

A \emph{filter} of a hoop $\mathbf{A}$ is a nonempty set $F\subseteq A$ satisfying
\begin{itemize}
    \item[(F1)] if $x\in F$ and $x\leq y$ then $y\in F,$
    \item[(F2)] if $x,y\in F$ then $x\cdot y\in F.$
\end{itemize}
We write $\mathbf{Fil}\,\mathbf A$ for the set of all filters of $\mathbf A$.
Any filter $F$ induces a congruence
$$
\theta_F=\{\langle x,y\rangle\in A^2\mid (x\rightarrow y)\cdot (y\rightarrow
x)\in F\}
$$
so we can, and will, write $\mathbf A/F$ for the factor algebra $\mathbf
A/\theta_F$.
Conversely if $\theta$ is a congruence of the hoop $\mathbf A$ then the class
$1/\theta$ is a filter and this correspondence between filters and
congruences is a lattice isomorphism.  

Suprema generally do not exist exist in hoops but if they exist (in particular,
if $\mathbf A$ is a finite hoop) then they are preserved by factorization. More
precisely, if $x\vee y\in\mathbf A$ exists then  $(x/F) \vee (y/F)\in \mathbf
A/F$ exists and $(x\vee y)/F=(x/F)\vee (y/F)$. 

It will be useful to have a shorthand notation for disjoint unions.
For an indexed system of sets $(A_i)_{i\in I}$, we write
$\sum_{i\in I}A_i$ for its disjoint union,
so that $(a,i)\in\sum_{i\in I} A_i$ if and only if $i\in I$ and $a\in A_i$.
For a disjoint union of two sets $A$ and $B$ we write $A+B$. 

For any $\mathbf{A}$ and $x\in A$ we put
$(x]=\{a\in A\mid a\leq x\}$. If $\mathbf{A}$ has a lattice reduct, then $(x]$
is a lattice ideal.  

Finally, we recall the definition of \emph{ordinal sum} of hoops.
Let $\mathbf A=(A;\cdot_\mathbf A,\rightarrow_\mathbf A,1)$ and $\mathbf
B=(B;\cdot_\mathbf B,\rightarrow_\mathbf B,1)$ be hoops.
The ordinal sum $\mathbf A\oplus\mathbf B$ is the hoop
$$\mathbf A\oplus\mathbf B=(A\setminus\{1\}+B,\rightarrow,\cdot,1)$$ in which
$$x\cdot y=\left\{\begin{array}{cll}
    x\cdot_\mathbf A y & \text{ iff }& x,y\in A\setminus\{1\}, \\
    x\cdot_\mathbf B y & \text{ iff }& x,y\in B,  \\
    x & \text{ iff }& x\in A\setminus\{1\},y\in B,  \\
    y & \text{ iff }& x\in B, y\in A\setminus\{1\}
    \end{array}\right.$$
and
$$x\rightarrow y=\left\{\begin{array}{cll}
    x\rightarrow_\mathbf A y & \text{ iff }& x,y\in A\setminus\{1\}, \\
    x\rightarrow_\mathbf B y & \text{ iff }& x,y\in B,  \\
    1 & \text{ iff }& x\in A\setminus\{1\},y\in B,  \\
    y & \text{ iff }& x\in B, y\in A\setminus\{1\}.
    \end{array}\right.$$
Roughly speaking, $\mathbf A\oplus\mathbf B$ arises by replacing the
element $1$ in $\mathbf A$, by (a copy of) $\mathbf{B}$.

We will conclude this section with one more remark on notation. If we take any hoop $\mathbf A$, then  if it is convenient we denote by $|\mathbf A|$ the underlying set of the hoop $\mathbf A.$
\section{$f$-product of hoops}
The following definition may not seem natural, but is the result of a process of
evolution starting from simple observations.
Let $\mathbf A=(A;\cdot ,\rightarrow,
1)$ be a finite hoop and let $F\in \mathbf{Fil}\,\mathbf A$.
Since every filter of a hoop $\mathbf A$ is also a subalgebra of $\mathbf A$,
we will distinguish notationally between a filter $F$ and a subalgebra
$\mathbf{F}$ with universe $F$.

A \emph{nucleus} on a hoop $\mathbf A$ is a closure operator
$\gamma\colon A\longrightarrow A$ satisfying $\gamma (a)\cdot \gamma (b)\leq
\gamma(a\cdot b)$. A \emph{nucleus image}  
of $\mathbf{A}$ (via $\gamma$) is the algebra
$\mathbf A_\gamma = (A_\gamma,\cdot_\gamma, \rightarrow, 1)$,
where $A_\gamma =\{\gamma(a)\mid a\in A\}$ and
$a\cdot_\gamma b= \gamma(a\cdot b)$. It is easy to show that
$\mathbf A_\gamma$ is also a hoop. Nuclei and nucleus images are quite important
in residuated-lattice theory, but we have no room for details here, and we refer
the reader to~\cite{GalTsin}.

For any $X\in \mathbf A/F$, we denote the top element of $X$ by $t_X$. A simple
application of divisibility shows that there are adjoint mappings  
\begin{center}
\begin{tikzcd}[row sep=huge]
 F \arrow[r,shift left=2,"t_X \cdot \-- "] \arrow[r,leftarrow, shift right=2,"\perp", " t_X\rightarrow \--"' ]&X 
 \end{tikzcd}
\end{center}
between the sets $F$ and $X$ such that
$$
t_X \cdot \-- \colon F\longrightarrow X
$$ is surjective and
$$
t_X\rightarrow \--\colon X\longrightarrow F
$$
is injective.
The composition of the mappings
$$
\gamma_X\colon F\longrightarrow F
$$
defined by $\gamma_X(a)=t_X\longrightarrow (t_X\cdot a)$ for any $a\in F$ is a
nucleus on $\mathbf F$. We write $\mathbf F_X$ (instead of $\mathbf
F_{\gamma_X}$) for the corresponding 
nucleus image. It is straightforward to show that $t_X\cdot \--\colon
F_X\longrightarrow X$ is a bijection. Then, there exists a
bijection $$\textstyle\Gamma\colon A\longrightarrow\sum_{X\in\mathbf A/F}F_X$$
defined by $\Gamma (a)=(t_X\rightarrow a,a/F)$. My first intention was
to transfer the original hoop structure from $\mathbf A$ to the set
$\sum_{X\in\mathbf A/F}F_X$ and try to describe the original hoop $\mathbf A$ as
a disjoint union of the nucleus images of the filter $F$ over the indexed system of
nuclei $(\gamma_X)_{X\in \mathbf A/F}$. However, the resulting structure was
syntactically complex and did not seem to bring any benefit. One more simple
observation proved crucial.  If we have a nucleus $\gamma_X$,
then the closed elements of the nucleus image $F_X$ are precisely of the form
$t_X\rightarrow a$ (for any $a\in F$). Moreover, if we denote by $l$ the
smallest element of the filter $F$, then $t_X\rightarrow a=t_X\rightarrow b$ if
and only if $a\wedge (t_X\vee l)=b\wedge (t_X\vee l)$.
The proof of this observation is a simple exercise and
has no further than motivational significance.

However, as a consequence of this simple observation, we get that
there exists
a bijection $t_X\rightarrow\--\colon (t_X\vee l]\longrightarrow F_X$ and hence
a bijection  
$$
\textstyle\Gamma'\colon A\longrightarrow\sum_{X\in\mathbf A/F}(t_X\vee l]
$$
which we will describe precisely below. The rest of the paper focusses on showing that
the transfer of structure from $\mathbf A$ to 
$\sum_{X\in\mathbf A/F}(t_X\vee l]$ is very natural and the key to it is
the mapping $X\mapsto l\vee t_X$ from $\mathbf A/F$ to $\mathbf F$. It will also
become clear that the following definition is nothing more than an algebraic
definition of this mapping.

\begin{defin}\label{def:prod-morph}
Let $\mathbf A=(A;\cdot,\rightarrow,1)$ and $\mathbf B=(B;\cdot, \rightarrow, 1)$
be hoops. Then the mapping $f\colon A\longrightarrow B$ we call a \emph{product
  morphism} from $\mathbf A$ to $\mathbf B$ if it satisfies 
\begin{itemize}
\item[(pM1)] $f(1)=1,$ 
\item[(pM2)] $f(x)\cdot f(y)= f(x\cdot y)=f(x)\wedge f(y)=f(x\wedge y)$.
\end{itemize}
for any $x,y\in A$.
\end{defin}

Preservation of infima gives the monotonicity of product morphisms. Using
the adjoint property and (pM2) we obtain the inequality $$f(x\rightarrow y)\leq
f(x)\rightarrow f(y)$$ for any product morphism $f.$ Moreover it is clear that
$f(x)\cdot f(x)=f(x)\wedge f(x)=f(x)$ and hence $f(\mathbf A)\subseteq
\mathbf{Id}\, \mathbf B$.  

\begin{defin}
Let $\mathbf A=(A;\cdot,\rightarrow,1)$ and $\mathbf B=(B;\cdot\rightarrow, 1)$
be hoops and let $f\colon B\longrightarrow A$ be a product morphism. The
algebra 
$$
\textstyle \mathbf A\ltimes_f \mathbf B =
(\sum_{x\in B}(f(x)],\cdot,\rightarrow,(1,1))
$$
where
\begin{itemize}
    \item[($\cdot$)] $(a,x)\cdot(b,y):=(a\cdot b,x\cdot y)$,
    \item[($\rightarrow$)] $(a,x)\rightarrow(b,y):=(f(x\rightarrow y)\wedge
      (a\rightarrow b), x\rightarrow y)$ 
\end{itemize}
for any $(a,x),(b,y)\in \sum_{x\in B}(f(x)]$,
will be called an \emph{$f$-product} of $\mathbf A$ and $\mathbf B$.
\end{defin}

\begin{theor}
If $\mathbf A=(A;\cdot,\rightarrow),1$ and $\mathbf B=(B;\cdot\rightarrow, 1)$
are hoops and $f\colon B\longrightarrow A$ is a product morphism, then the
$f$-product $\mathbf A\ltimes_f \mathbf B$ is a hoop.
\end{theor}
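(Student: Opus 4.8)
The plan is a direct verification of the hoop axioms; the only steps with genuine content are (H2) and (H3), everything else being routine. First I would check that both operations land in the carrier $\sum_{x\in B}(f(x)]$: if $a\le f(x)$ and $b\le f(y)$ then $a\cdot b\le f(x)\cdot f(y)=f(x\cdot y)$ by (pM2), so $(a\cdot b,x\cdot y)$ is admissible, while $f(x\to y)\wedge(a\to b)\le f(x\to y)$ trivially, so $(a,x)\to(b,y)$ is admissible too; also $1\le f(1)=1$, so $(1,1)$ lies in the carrier. Since $\cdot$ is computed componentwise and $\mathbf A,\mathbf B$ are commutative monoids, $\bigl(\sum_{x\in B}(f(x)];\cdot,(1,1)\bigr)$ is a commutative monoid. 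Axiom (H1) is immediate: $(a,x)\to(a,x)=\bigl(f(x\to x)\wedge(a\to a),x\to x\bigr)=(f(1)\wedge1,1)=(1,1)$, using (H1) in $\mathbf A$ and $\mathbf B$ and (pM1).

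The key identity behind the rest is
$$f(x)\cdot f(x\to w)=f\bigl(x\cdot(x\to w)\bigr)=f(x\wedge w)=f(x)\wedge f(w)\qquad(x,w\in B),$$
obtained from (pM2) and divisibility in $\mathbf B$. I will use it together with the already noted facts that $f(u\to v)\le f(u)\to f(v)$ and that every value of $f$ is idempotent, and with the identity $p\cdot(q\wedge r)=(p\cdot q)\wedge(p\cdot r)$ established above.

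For (H3): the second components of $(a,x)\cdot\bigl((a,x)\to(b,y)\bigr)$ and $(b,y)\cdot\bigl((b,y)\to(a,x)\bigr)$ are $x\cdot(x\to y)=x\wedge y$ and $y\cdot(y\to x)=x\wedge y$, hence equal. For the first components, using $p\cdot(q\wedge r)=(p\cdot q)\wedge(p\cdot r)$, the idempotency of $f(x\to y)$ (so $a\cdot f(x\to y)=a\wedge f(x\to y)$), and divisibility $a\cdot(a\to b)=a\wedge b$, one gets $a\cdot\bigl(f(x\to y)\wedge(a\to b)\bigr)=a\wedge b\wedge f(x\to y)$, and symmetrically $b\cdot\bigl(f(y\to x)\wedge(b\to a)\bigr)=a\wedge b\wedge f(y\to x)$. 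Finally $a\wedge b\le f(x)\wedge f(y)=f(x\wedge y)$, while $x\wedge y\le y\le x\to y$ and $x\wedge y\le x\le y\to x$, so $f(x\wedge y)\le f(x\to y)$ and $f(x\wedge y)\le f(y\to x)$; hence both first components equal $a\wedge b$, proving (H3).

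For (H2): the second components of $\bigl((a,x)\cdot(b,y)\bigr)\to(c,z)$ and $(a,x)\to\bigl((b,y)\to(c,z)\bigr)$ are both $(x\cdot y)\to z=x\to(y\to z)$, by (H2) in $\mathbf B$. For the first components, (H2) in $\mathbf A$ gives $(a\cdot b)\to c=a\to(b\to c)$, the identity $a\to(q\wedge r)=(a\to q)\wedge(a\to r)$ gives $a\to\bigl(f(y\to z)\wedge(b\to c)\bigr)=\bigl(a\to f(y\to z)\bigr)\wedge\bigl(a\to(b\to c)\bigr)$, and $f\bigl((x\cdot y)\to z\bigr)=f\bigl(x\to(y\to z)\bigr)$; so (H2) reduces to checking $f\bigl(x\to(y\to z)\bigr)\wedge\bigl(a\to(b\to c)\bigr)\le a\to f(y\to z)$, for which $f\bigl(x\to(y\to z)\bigr)\le a\to f(y\to z)$ suffices. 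By adjointness this is $a\cdot f\bigl(x\to(y\to z)\bigr)\le f(y\to z)$, and since $a\le f(x)$ it is enough that $f(x)\cdot f(x\to w)\le f(w)$ with $w=y\to z$, which is exactly the key identity since $f(x)\cdot f(x\to w)=f(x)\wedge f(w)\le f(w)$. I expect the main obstacle to be isolating the key identity $f(x)\cdot f(x\to w)=f(x)\wedge f(w)$ and recognizing that the correction factor $f(x\to y)$ in the definition of $\to$ is precisely what makes the first-component computation in (H2) close up; once that is seen, the remaining verifications are mechanical.
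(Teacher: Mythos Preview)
Your proof is correct and follows essentially the same route as the paper: you verify well-definedness, the monoid axioms, and (H1) exactly as the paper does, and your computations for (H2) and (H3) both arrive at $(a,x)\cdot\bigl((a,x)\to(b,y)\bigr)=(a\wedge b,x\wedge y)$ and reduce (H2) to the single inequality $f(x\to(y\to z))\le a\to f(y\to z)$. The only cosmetic difference is that the paper derives this inequality directly from $f(u\to v)\le f(u)\to f(v)$ and $a\le f(x)$, whereas you pass through adjointness and your ``key identity'' $f(x)\cdot f(x\to w)=f(x)\wedge f(w)$; these are equivalent formulations of the same fact.
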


\begin{proof}
It is clear that $(a,x)\in  \sum_{x\in B}(f(x)]$ if and only if $a\leq f(x)$
(for any $(a,x)\in A\times B $) and hence $(a,x),(b,y)\in \sum_{x\in B}(f(x)]$
implies that $a\leq f(x)$ and $b\leq f(y)$ and consequently $a\cdot b\leq
f(x)\cdot f(y)=f(x\cdot y).$ Clearly also $f(x\rightarrow y)\wedge (a\rightarrow
b)\leq f(x\rightarrow y)$ and hence
$$
\textstyle (a,x)\cdot(b,y),
(a,x)\rightarrow(b,y)\in \sum_{x\in B}(f(x)].
$$
It is clear that $(\sum_{x\in B}(f(x)],\cdot,(1,1))$ is a commutative monoid and that
$$
(a,x)\rightarrow (a,x)=
(f(x\rightarrow x)\wedge (a\rightarrow a),x\rightarrow x)=(1,1)
$$
holds for any $(a,x)\in  \sum_{x\in B}(f(x)]$, showing that (H1) holds.

For (H2), note that 
if  $(a,x),(b,y),(c,z)\in  \sum_{x\in B}(f(x)]$ then $a\leq f(x)$ implies
$$
f(x\rightarrow (y\rightarrow z))\leq f(x)\rightarrow f(y\rightarrow z)\leq
a\rightarrow f(y\rightarrow z)
$$ 
and hence
    \begin{eqnarray*}
       && (a,x)\rightarrow ((b,y)\rightarrow (c,z)) \\
       &=& (a,x)\rightarrow (f(y\rightarrow z)\wedge (b\rightarrow c),y\rightarrow z)\\
       &=& (f(x\rightarrow (y\rightarrow z))\wedge (a\rightarrow (f(y\rightarrow z)\wedge (b\rightarrow c))),x\rightarrow (y\rightarrow z))\\
       &=& (f(x\rightarrow (y\rightarrow z))\wedge (a\rightarrow f(y\rightarrow z))\wedge (a\rightarrow (b\rightarrow c)),(x\cdot y)\rightarrow z)\\
       &=& (f((x\cdot y)\rightarrow z)\wedge ((a\cdot b)\rightarrow c),(x\cdot y)\rightarrow z)\\
       &=& (a\cdot b,x\cdot y)\rightarrow (c,z)\\
       &=&((a,x)\cdot (b,y))\rightarrow (c,z).  
    \end{eqnarray*}
To show that (H3) holds, we proceed similarly. Note that    
$b\leq f(y)\leq f(x\rightarrow y)$ and the idempotence of $f(x\rightarrow y)$ imply
    \begin{eqnarray*}
        (a,x)\cdot ((a,x)\rightarrow (b,y))&=&(a,x)\cdot (f(x\rightarrow y)\wedge (a\rightarrow b),x\rightarrow y)\\
        &=&(a\cdot f(x\rightarrow y)\wedge a\cdot (a\rightarrow b),x\cdot (x\rightarrow y))\\
        &=& (a\wedge b\wedge f(x\rightarrow y),x\wedge y)\\
        &=& (a\wedge b,x\wedge y).
    \end{eqnarray*}
By analogy, it is also true that $$(b,y)\cdot ((b,y)\rightarrow(a,x))=(b\wedge a,y\wedge x).$$
    From this it easily follows the divisibility 
    $$(a,x)\cdot ((a,x)\rightarrow (b,y))=(b,y)\cdot ((b,y)\rightarrow(a,x)).$$
\end{proof}

It is an easy observation that  $(a,x)\leq (b,y)$ holds if and only if
$$
(f(x\rightarrow y)\wedge (a\rightarrow b),x\rightarrow y)=
(a,x)\rightarrow (b,y)=(1,1).
$$
This is further equivalent to $a\leq b$ and $x\leq y$ for any
$(a,x),(b,y)\in |\mathbf A\ltimes_f\mathbf B|$.
Since $a\wedge b\leq f(x)\wedge f(y)=f(x\wedge y)$ and $a\vee b\leq
  f(x)\vee f(y)\leq f(x\vee y)$ hold for any $(a,x),(b,y)\in |\mathbf
  A\ltimes_f\mathbf B|$, we get that $(a\wedge b,x\wedge y),(a\vee b,x\vee y)\in
  |\mathbf A\ltimes_f\mathbf B|$.
Now, it is easy to derive the equations
$$
(a,x)\wedge (b,y)=(a\wedge b,x\wedge y)
$$
and
$$
(a,x)\vee (b,y)=(a\vee b,x\vee y)
$$
whenever $a\vee b$ and $x\vee y$ exist. It shows that the poset (lattice)
reduct of $\mathbf A\ltimes_f \mathbf B$ is a subposet (sublattice) of the poset (lattice) reduct of the direct product of $\mathbf A$ and $\mathbf B$.

\begin{exam}
If $\mathbf A$ and $\mathbf B$ are arbitrary hoops and $\varepsilon\colon
\mathbf B\longrightarrow\mathbf A$ is the constant mapping $\varepsilon (x)=1$
then clearly $\varepsilon$ is a product morphism and it is the greatest morphism
with respect to the natural order. It is easy to check that $$\mathbf
A\ltimes_\varepsilon\mathbf B= \mathbf A\times \mathbf B.$$ 
\end{exam}

\begin{exam}
Let $\mathbf A$ and $\mathbf B$ be hoops such that there exists the least
element $0\in \mathbf A$. Then the map $\sigma\colon
\mathbf B\longrightarrow\mathbf A$ defined by 
$$\sigma (x)=\left\{\begin{array}{cll}
1     &\text{ iff } & x=1  \\
0    & \text{ iff } & x\not =1  
\end{array}\right.
$$
is a product morphism. Clearly $\sigma$ is the least morphism
with respect to the natural order
and it is easy to check that
$$
\mathbf A\ltimes_\sigma\mathbf B= \mathbf B\oplus
\mathbf A.
$$ 
\end{exam}

\begin{defin}
If $\mathbf A,\mathbf B$ and $\mathbf C$ are hoops then we say that $(f,g)$ is a
\emph{left-associated pair of product morphims with respect to $(\mathbf
  A,\mathbf B,\mathbf C)$} if $f\colon \mathbf B\rightarrow \mathbf A$
and $$g\colon \mathbf C\longrightarrow \mathbf A\ltimes_f\mathbf B$$ are product
morphisms.  
Similarly, $(f,g)$ is a \emph{right-associated pair of product morphims with
  respect to $(\mathbf A,\mathbf B,\mathbf C)$} if $g\colon \mathbf C\rightarrow
\mathbf B$ and $$f\colon \mathbf B\ltimes_g\mathbf C\longrightarrow \mathbf A$$
are product morphisms. 
\end{defin}

It is obvious that any left-associated pair of product morphsims with respect to
$(\mathbf A,\mathbf B,\mathbf C)$ allows to construct the hoop $(\mathbf
A\ltimes_f\mathbf G)\ltimes_g\mathbf C$ and any right-associated pair of product
morphims with respect to $(\mathbf A,\mathbf B,\mathbf C)$ allows to construct
the hoop $\mathbf A\ltimes_f(\mathbf G\ltimes_g\mathbf C).$ The next
theorem describes the relationship between these concepts. 

\begin{theor}
(i) Let $(f,g)$ be a left-associated pair of product morphisms with respect
to  $(\mathbf A,\mathbf B,\mathbf C)$. Let $g_1$, $g_2$ be the natural
projection maps arising from $g$, so that $g(c)=(g_1(c),g_2(c))$
for any $c\in C$. Define the mappings: 
\begin{itemize}
\item $\overline g\colon \mathbf C\longrightarrow \mathbf B$ by
  $\overline{g}(c)=g_2(c)$ for all $c\in C$, 
\item $\overline{f}\colon \mathbf B\ltimes_{\overline{g}}\mathbf
  C\longrightarrow \mathbf A$ by $\overline{f}(b,c)=f(b)\wedge g_1(c)$ for any
  $(b,c)\in |\mathbf B\ltimes_{\overline g} \mathbf C|$. 
\end{itemize}  
Then $\alpha (f,g)=(\overline{f},\overline{g})$ is a right-associated pair
of product morphims with respect to $(\mathbf A,\mathbf B,\mathbf C)$. 

(ii) Let $(f,g)$ be a right-associated pair of product morphisms with respect to
$(\mathbf A,\mathbf B,\mathbf C)$. Define the mappings: 
\begin{itemize}
\item $\overline f\colon \mathbf B\longrightarrow \mathbf A$ by
  $\overline{f}(b)=f(b,1)$ for any $b\in B$, 
\item  $\overline{g}\colon \mathbf C\longrightarrow
  \mathbf A\ltimes_{\overline f}\mathbf B$ by
  $\overline g (c)=(f(g(c),c),g(c))$ for any $c\in C$.
\end{itemize}
Then $\beta (f,g)=(\overline{f},\overline{g})$ is a left-associated pair of
product morphisms with respect to  $(\mathbf A,\mathbf B,\mathbf C)$. 

(iii) The corespondences $\alpha$ and $\beta$ between left and right-associated
pairs of product morphisms with respect to  $(\mathbf A,\mathbf B,\mathbf C)$
are mutually inverse bijective mappings.  Moreover,
if $\alpha (f,g)=(\overline{f},\overline g)$, or equivalently $(f,g)=\beta
(\overline{f},\overline g)$, then
$$
(\mathbf A\ltimes_f\mathbf B)\ltimes_g\mathbf C\cong \mathbf
A\ltimes_{\overline f} (\mathbf B\ltimes_{\overline g}\mathbf C).
$$ 
\end{theor}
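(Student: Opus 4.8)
The plan is to establish (i), (ii), (iii) in turn. Throughout I will use three elementary facts. First, the image of a product morphism consists of idempotents, so on any set of such values $\cdot$ and $\wedge$ coincide (by the lemma on idempotents), and in particular a meet of values of product morphisms is again idempotent. Second, in any $f$-product the operations $\cdot$ and $\wedge$ are computed coordinatewise, so for a product morphism $h$ into an $f$-product each coordinate of $h$, being the composite of $h$ with a $\{1,\cdot,\wedge\}$-homomorphism (a projection), is again a product morphism; more generally $p\circ h$ is a product morphism whenever $h$ is and $p$ preserves $1$, $\cdot$ and $\wedge$. Third, $(b,1)$ is a legitimate element of $\mathbf B\ltimes_g\mathbf C$ for every $b\in B$, since $g(1)=1$ is the top of $\mathbf B$.

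For (i), $\overline g=g_2$ is a product morphism by the second fact, so $\mathbf B\ltimes_{\overline g}\mathbf C$ is defined; and $\overline f(b,c)=f(b)\wedge g_1(c)$ is a product morphism because it preserves $1$ (as $f(1)=1$ and $g_1(1)=1$), preserves $\cdot$ and $\wedge$, and has idempotent values ($f(b)$ is idempotent and $g_1(c)$ is idempotent, the latter since $g(c)$ is idempotent in $\mathbf A\ltimes_f\mathbf B$), so $\cdot$ and $\wedge$ agree on its image — expanding $\overline f((b_1,c_1)\cdot(b_2,c_2))$ with the coordinatewise product and the fact that $f,g_1$ preserve products makes (pM2) explicit. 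For (ii), $\overline f(b)=f(b,1)$ is the composite of $f$ with $b\mapsto(b,1)$, a $\{1,\cdot,\wedge\}$-homomorphism by the third fact, hence a product morphism; since $(g(c),c)\le(g(c),1)$ in $\mathbf B\ltimes_g\mathbf C$ and $f$ is monotone, $f(g(c),c)\le f(g(c),1)=\overline f(g(c))$, so $\overline g(c)=(f(g(c),c),g(c))$ really is an element of $\mathbf A\ltimes_{\overline f}\mathbf B$, and $\overline g$ is a product morphism by expanding $\overline g(c_1)\cdot\overline g(c_2)$ using that $f$ and $g$ are product morphisms and the operations are coordinatewise. Thus $\alpha$ and $\beta$ genuinely land in the asserted classes.

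For (iii) I would first check $\beta\circ\alpha=\mathrm{id}$ and $\alpha\circ\beta=\mathrm{id}$ by substitution. From a left-associated $(f,g)$, $\beta(\alpha(f,g))$ has first component $b\mapsto f(b)\wedge g_1(1)=f(b)$ and second component $c\mapsto(f(g_2(c))\wedge g_1(c),\,g_2(c))$, which is $g(c)$ precisely because $(g_1(c),g_2(c))\in\mathbf A\ltimes_f\mathbf B$ means $g_1(c)\le f(g_2(c))$. From a right-associated $(f,g)$, $\alpha(\beta(f,g))$ has second component $g$ and first component $(b,c)\mapsto f(b,1)\wedge f(g(c),c)=f((b,1)\wedge(g(c),c))=f(b,c)$, since $f$ preserves meets and $(b,1)\wedge(g(c),c)=(b\wedge g(c),c)=(b,c)$ as $b\le g(c)$. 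Hence $\alpha,\beta$ are mutually inverse bijections. For the isomorphism, set $(\overline f,\overline g)=\alpha(f,g)$ and take the reassociation $\phi\colon((a,b),c)\mapsto(a,(b,c))$. It is a bijection because both universes unfold to the same set of triples: $((a,b),c)$ lies in $(\mathbf A\ltimes_f\mathbf B)\ltimes_g\mathbf C$ iff $a\le f(b)$, $a\le g_1(c)$ and $b\le g_2(c)$, while $(a,(b,c))$ lies in $\mathbf A\ltimes_{\overline f}(\mathbf B\ltimes_{\overline g}\mathbf C)$ iff $b\le\overline g(c)=g_2(c)$ and $a\le\overline f(b,c)=f(b)\wedge g_1(c)$, the same three inequalities. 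Since $\cdot$ is coordinatewise in both iterated products, $\phi$ is a monoid isomorphism; as the natural order of a hoop, and hence its residual, is definable from $\cdot$, $\phi$ is automatically a hoop isomorphism.

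The work is essentially bookkeeping. The only place where something has to happen is that the membership condition $g_1(c)\le f(g_2(c))$ built into the very definition of $\mathbf A\ltimes_f\mathbf B$ (equivalently the condition $b\le g(c)$ in $\mathbf B\ltimes_g\mathbf C$) is exactly what makes the relevant meets collapse, both in $\beta\circ\alpha=\mathrm{id}$ and — if one checks $\rightarrow$ by hand rather than invoking definability from $\cdot$ — in the verification that $\phi$ preserves $\rightarrow$, where the key identity is $\overline f((b_1,c_1)\rightarrow(b_2,c_2))=f(b_1\rightarrow b_2)\wedge g_1(c_1\rightarrow c_2)$. So I expect the main care to go into keeping track of the domain constraints of the nested $f$-products and unwinding the two layers of the residuation formula.
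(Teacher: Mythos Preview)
Your proof is correct and follows essentially the same structure as the paper's for parts (i), (ii), and the mutual inversity of $\alpha$ and $\beta$ in (iii). The one substantive difference is your treatment of the isomorphism in (iii): where the paper verifies $\rightarrow$-preservation by a direct, page-long calculation unwinding both layers of the residuation formula, you invoke the fact that in a hoop the order---and hence the residual---is recoverable from the monoid structure alone (via the natural order $a\le b\iff\exists c\,(b\cdot c=a)$), so that a bijective monoid homomorphism between hoops is automatically a hoop isomorphism. This is valid and considerably cleaner. The paper's explicit computation has the incidental merit of exhibiting the identity you also flag at the end, but that identity is not logically needed once the monoid isomorphism is in hand.
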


\begin{proof}
    (i) Since $f$ and $g$ are product morphisms and as the operations
    $\cdot$ and $\wedge$ are defined coordinatewise, we have
    \begin{eqnarray*}
     &&1 = f(1) =g_1(1)=g_2(1), \\
       && f(a\cdot b)= f(a\wedge b)=f(a)\cdot f(b)=f(a)\wedge f(b,)\\
       && g_1(x\cdot y)= g_1(x\wedge y)=g_1(x)\cdot g_1(y)=g_1(x)\wedge g_1(y),\\
       && g_2(x\cdot y)= g_2(x\wedge y)=g_2(x)\cdot g_2(y)=g_2(x)\wedge g_2f(y)  
    \end{eqnarray*}
    for all $a,b\in B$ and all $x,y\in C$. The fact that $\overline g:=g_2\colon
    \mathbf C\longrightarrow \mathbf B$ is a product morphism is a direct
    consequence of that. 
 
Next, since the element $f(a)$ is an idempotent, we have
$$
\overline{f}(b,c)=f(b)\wedge g_1(c)=f(b)\cdot g_1(c)
$$
for any $(b,c)\in |\mathbf B\ltimes_g \mathbf C|$. Hence
\begin{eqnarray*}
        \overline{f}((b_1,c_1)\cdot (b_2,c_2))&=&\overline{f}(b_1\cdot b_2,c_1\cdot c_2)=f(b_1\cdot b_2)\cdot g_1(c_1\cdot c_2)\\
        &=& f(b_1)\cdot f(b_2)\cdot g_1(c_1)\cdot g_1(c_2),\\
        &=& \overline f (b_1,c_1)\cdot \overline f(b_2,c_2),
\end{eqnarray*}
\begin{eqnarray*}
        \overline{f}((b_1,c_1)\wedge (b_2,c_2))&=&\overline{f}(b_1\wedge b_2,c_1\wedge c_2)=f(b_1\wedge b_2)\wedge g_1(c_1\wedge c_2)\\
        &=& f(b_1)\wedge f(b_2)\wedge g_1(c_1)\wedge g_1(c_2),\\
        &=& \overline f (b_1,c_1)\wedge \overline f(b_2,c_2),
\end{eqnarray*}
\begin{eqnarray*}
        \overline{f}((b_1,c_1)\wedge (b_2,c_2))&=&\overline{f}(b_1\wedge b_2,c_1\wedge c_2)=f(b_1\wedge b_2)\cdot g_1(c_1\wedge c_2)\\
        &=& f(b_1)\cdot f(b_2)\cdot g_1(c_1)\cdot g_1(c_2),\\
        &=& \overline f (b_1,c_1)\cdot \overline f(b_2,c_2),
\end{eqnarray*}
    for any $(b_1,c_1),(b_2,c_2)\in |\mathbf B\ltimes_g \mathbf C|$. Together, the mapping $$\overline f\colon |\mathbf B\ltimes_{\overline g}\mathbf C|\longrightarrow \mathbf A$$ is a product morphism.

    (ii) If $(f,g)$ is right-associated pair of product morphisms with respect to the triple of hoops  $(\mathbf A,\mathbf B,\mathbf C)$ then $\overline f (1)= f(1,1)=1$ and
\begin{eqnarray*}
        && f(b_1\cdot b_2,1)={f}(b_1,1)\cdot {f}(b_2,1) = {f}(b_1\wedge b_2,1)={f}(b_1,1)\wedge {f}(b_2,1) \\
                &=& \overline{f}(b_1\cdot b_2)=\overline{f}(b_1)\cdot \overline{f}(b_2) = \overline{f}(b_1\wedge b_2)=\overline{f}(b_1)\wedge \overline{f}(b_2) 
\end{eqnarray*}
    for any $b_1,b_2\in B$. Hence $\overline f\colon \mathbf B\longrightarrow\mathbf A$ is a product morphism.

    If $c\in \mathbf C$ then $f(g(c),c)\leq f(g(c),1)=\overline f(g(c))$. This implies that $\overline g(c)=(f(g(c),c), g(c))\in |\mathbf A\ltimes_{\overline f}\mathbf B|.$ It is clear that $\overline{g}(1)=(f(g(1),1),g(1))= (1,1)$ holds and
    \begin{eqnarray*}
    \overline {g} (c_1\cdot c_2) &=& (f(g(c_1\cdot c_2),c_1\cdot c_2),g(c_1\cdot c_2))\\
    &=& (f(g(c_1)\cdot g(c_2),c_1\cdot c_2),g(c_1)\cdot g(c_2))\\
    &=& (f(g(c_1),c_1)\cdot f(g(c_2),c_2),g(c_1)\cdot g(c_2))\\
    &=& f((g(c_1),c_1),g(c_1))\cdot f((g(c_2),c_2),g(c_2)) \\
    &=&\overline g(c_1)\cdot \overline g (c_2,)
    \end{eqnarray*}

      \begin{eqnarray*}
    \overline {g} (c_1\wedge c_2) &=& (f(g(c_1\wedge c_2),c_1\wedge c_2),g(c_1\wedge c_2))\\
    &=& (f(g(c_1)\wedge g(c_2),c_1\wedge c_2),g(c_1)\wedge g(c_2))\\
    &=& (f(g(c_1),c_1)\wedge f(g(c_2),c_2),g(c_1)\wedge g(c_2))\\
    &=& f((g(c_1),c_1),g(c_1))\wedge f((g(c_2),c_2),g(c_2)) \\
    &=&\overline g(c_1)\wedge \overline g (c_2,)
    \end{eqnarray*}

      \begin{eqnarray*}
    \overline {g} (c_1\cdot c_2) &=& (f(g(c_1\cdot c_2),c_1\cdot c_2),g(c_1\cdot c_2))\\
    &=& (f((g(c_1),c_1)\cdot g(c_2),c_2),g(c_1)\cdot g(c_2))\\
    &=& (f((g(c_1),c_1),g(c_1))\wedge (f((g(c_2),c_2),g(c_2))\\
    &=&\overline {g} (c_1)\wedge \overline {g} (c_2)
    \end{eqnarray*}

    hold for any $c_1,c_2\in C$. This proves that $\overline{g}\colon\mathbf
    C\longrightarrow \mathbf A\ltimes_{\overline f} \mathbf B$ is a product
    morphism. 

(iii) Let $(f,g)$ be a left-associated pair of product morphisms with respect to
$(\mathbf A,\mathbf B,\mathbf C)$. Put $\alpha
(f,g)=(\overline f,\overline g)$ and $\beta (\overline f,\overline g)=(f',g')$.
We decompose $g$ into projections, so that
$g(c)=(g_1(c),g_2(c))\in |\mathbf A\ltimes_f\mathbf B|$. Then
$g_1(c)\leq f(g_2(c)).$ Further, we have
$$
f'(b)=\overline{f}(b,1)=f(b)\wedge
g_1(1)=f(b)
$$
for any $b\in \mathbf B.$ If $c\in\mathbf C$ then 
    \begin{eqnarray*}
        g'(c)&=& (\overline f (\overline g(c),c),\overline g(c)) = (\overline f ( g_2(c),c), g_2(c))\\
        &=& (f(g_2(c))\wedge g_1(c),g_2(c))=(g_1(c),g_2(c))\\
        &=& g(c)
    \end{eqnarray*}
holds and moreover $\beta\alpha (f,g)=(f',g')=(f,g).$

Now let $(f,g)$ be a right-associated pair of product morphisms with respect to
$(\mathbf A,\mathbf B,\mathbf C)$. Put $\beta (f,g)=(\overline f,\overline g)$
and $\alpha (\overline f,\overline g)=(f',g')$.
Then 
$$
g'(c)=\overline g_2(c)= \pi_2 (\overline g(c))=\pi_2 (f(g(c),g(c)),g(c))=g(c)
$$
holds for any $c\in\mathbf C$.
Similarly if $(b,c)\in |\mathbf B\ltimes_g\mathbf C|$ (and so $b\leq g(c)$), then
we have
    \begin{eqnarray*}
        f'(b,c) &=& \overline f(b)\wedge \overline g_1(c)\\
        &=& f(b,1)\wedge \pi_1 (f(g(c),c),g(c))\\
        &=& f(b,1)\wedge f(g(c),c)=f(b\wedge g(c),1\wedge c)\\
        &=& f(b,c).
    \end{eqnarray*}
    Hence $\alpha\beta(f,g)=(f',g')=(f,g)$ holds.

It remains to show the ``moreover'' part of (iii).
Let $(f,g)$ be a right-associated pair of product morphisms with respect to
$(\mathbf A,\mathbf B,\mathbf C)$, let 
$\beta(f,g)=(\overline f,\overline g)$, and pick any
$a\in\mathbf A$, $b\in\mathbf B$ and $c\in\mathbf C$. Then
    \begin{eqnarray*}
        && ((a,b),c)\in |(\mathbf A\ltimes_{\overline f} \mathbf B)\ltimes_{\overline g}\mathbf C)| \\
         \iff && a\leq \overline f (b)=f(b,1) \,\mathbin{\&}\, (a,b) \leq \overline g(c)=(f(g(c),c),g(c))\\
         \iff && a\leq f(b,1) \,\mathbin{\&}\, a \leq f(g(c),c) \,\mathbin{\&}\, b\leq g(c)\\
         \iff && a\leq f(b,1)\wedge f(g(c),c) = f(b,c) \,\mathbin{\&}\, b\leq g(c)\\
         \iff && (a,(b,c))\in |\mathbf A\ltimes_f(\mathbf B\ltimes_g\mathbf C)|.
    \end{eqnarray*}
Hence we can define a bijective mapping 
$$
\Gamma\colon |\mathbf A\ltimes_f(\mathbf B\ltimes_g\mathbf C)|\longrightarrow
|(\mathbf A\ltimes_{\overline f} \mathbf B)\ltimes_{\overline g}\mathbf C)|
$$
putting
$\Gamma (a,(b,c))=((a,b),c)$ for any $(a,(b,c))\in |\mathbf A\ltimes_f(\mathbf
B\ltimes_g\mathbf C)|$. 

For any $(a_1,(b_1,c_1)), (a_2,(b_2,c_2))\in |\mathbf A\ltimes_f(\mathbf
B\ltimes_g\mathbf C)|$ we have
    \begin{eqnarray*}
        \Gamma (a_1,(b_1,c_1))\cdot \Gamma (a_2,(b_2,c_2)) &=& ((a_1,b_1),c_1)\cdot ((a_2,b_2),c_2)\\
        &=& ((a_1\cdot a_2,b_1\cdot b_2),c_1\cdot c_2)\\
        &=& \Gamma(a_1\cdot a_2,(b_1\cdot b_2,c_1\cdot c_2))\\
        &=&\Gamma ((a_1,(b_1,c_1))\cdot (a_2,(b_2,c_2))) 
    \end{eqnarray*}
and
\begin{eqnarray*}
   && \Gamma (a_1,(b_1,c_1))\rightarrow \Gamma (a_2,(b_2,c_2)) \\
   &=&((a_1,b_1),c_1)\rightarrow ((a_2,b_2),c_2)\\
    &=&(\overline g(c_1\rightarrow c_2) \wedge ((a_1,b_1)\rightarrow (a_2,b_2)),c_1\rightarrow c_2)\\
    &=&(\overline g(c_1\rightarrow c_2) \wedge (\overline f(b_1\rightarrow b_2) \wedge (a_1\rightarrow a_2),b_1\rightarrow b_2),c_1\rightarrow c_2)\\
    &=&( (f(g(c_1\rightarrow c_2),c_1\rightarrow c_2),g(c_1\rightarrow c_2)) \wedge\\ && (\overline f(b_1\rightarrow b_2) \wedge (a_1\rightarrow a_2),b_1\rightarrow b_2),c_1\rightarrow c_2)\\
    &=& ((f(g(c_1\rightarrow c_2),c_1\rightarrow c_2)\wedge \overline f(b_1\rightarrow b_2)\wedge (a_1\rightarrow a_2),g(c_1\rightarrow c_2)\wedge \\ && (b_1\rightarrow b_2)),c_1\rightarrow c_2)\\
    &=& ((f(g(c_1\rightarrow c_2),c_1\rightarrow c_2)\wedge \ f(b_1\rightarrow b_2,1)\wedge (a_1\rightarrow a_2),g(c_1\rightarrow c_2)\wedge \\ &&(b_1\rightarrow b_2)),c_1\rightarrow c_2)\\
    &=& ((f(g(c_1\rightarrow c_2)\wedge (b_1\rightarrow b_2),(c_1\rightarrow c_2)\wedge 1)\wedge \\ &&(a_1\rightarrow a_2),g(c_1\rightarrow c_2)\wedge (b_1\rightarrow b_2)),c_1\rightarrow c_2)\\
    &=& ((f(g(c_1\rightarrow c_2)\wedge (b_1\rightarrow b_2),c_1\rightarrow c_2)\wedge (a_1\rightarrow a_2),g(c_1\rightarrow c_2)\wedge \\ &&(b_1\rightarrow b_2)),c_1\rightarrow c_2)\\
    &=& ((f((b_1,c_1)\rightarrow(b_2,c_2))\wedge (a_1\rightarrow a_2),g(c_1\rightarrow c_2)\wedge (b_1\rightarrow b_2)),c_1\rightarrow c_2)\\
    &=& \Gamma (f((b_1,c_1)\rightarrow(b_2,c_2))\wedge (a_1\rightarrow a_2),(g(c_1\rightarrow c_2)\wedge (b_1\rightarrow b_2),c_1\rightarrow c_2))\\
    &=& \Gamma (f((b_1,c_1)\rightarrow(b_2,c_2))\wedge (a_1\rightarrow a_2),(b_1,c_1)\rightarrow(b_2,c_2))\\
    &=& \Gamma ((a_1,(b_1,c_1))\rightarrow (a_2,(b_2,c_2)))
\end{eqnarray*}
showing that $\Gamma$ is an isomorphism. 
\end{proof}

\section{Decomposition of finite hoops}

In this section we show that every finite hoop is (up to isomorphism)
a finite $f$-product of finite MV-chains. We first observe that
in finite hoops suprema always exist: an important fact which we later use
without mention.

\begin{lemma}\label{lem:cases}
Let $\mathbf A$ be a finite hoop and let $F\in\mathbf{Fil}\,\mathbf A$.
Let $t_X$ be the top element and $l_X$ the bottom element of $X$,
for any $X\in \mathbf A/F$. The bottom element of $F$ we denote $l$ (instead of
$l_F$) for simplicity. The following hold:

\begin{itemize}
    \item[(i)] $t_X\bullet t_Y\leq t_{X\bullet Y}$ for any operation $\bullet$ belonging to the set $\{\vee,\wedge,\cdot,\rightarrow\},$
    \item[(ii)] $X\leq Y$ if and only if $t_X\leq t_Y,$
    \item[(iii)] $t_X\wedge t_Y=t_{X\wedge Y},$ 
    \item[(iv)] $t_X\rightarrow t_Y=t_{X\rightarrow Y},$ 
    \item[(v)] $t_X\cdot t_{X\rightarrow Y}=t_{X\wedge Y},$ 
    \item[(vi)] $a\rightarrow t_X=t_X$ and $t_X\cdot a=t_X\wedge a$  for any $a\in F,$
    \item [(vii)] $t_X\cdot l=t_X\wedge l= l_X.$
\end{itemize}
for any $X,Y\in \mathbf A/F.$
\end{lemma}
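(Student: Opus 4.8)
My plan rests on two preliminary observations, which I would record first. Since $F$ is finite and closed under $\cdot$, the product of all its elements is its least element, which is the element denoted $l$; more generally each class $X\in\mathbf A/F$ is finite and closed under the (everywhere‑existing) suprema, because $x,y\in X$ gives $(x\vee y)/F=(x/F)\vee(y/F)=X$, so $X$ is up‑directed and hence has a maximum, namely $t_X$. Also, $x\mapsto x/F$ is a surjective hoop homomorphism $\mathbf A\to\mathbf A/F$ which in addition preserves suprema, and in $\mathbf A/F$ the class $F$ is the top element while $l$ is the least element of the class $F$.

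Granting this, (i) is immediate for all four operations simultaneously: since $x\mapsto x/F$ preserves $\vee$ and the term operations $\wedge,\cdot,\rightarrow$, the element $t_X\bullet t_Y$ lies in the class $X\bullet Y$, hence $t_X\bullet t_Y\leq t_{X\bullet Y}=\max(X\bullet Y)$. For (ii), if $t_X\leq t_Y$ then $X=t_X/F\leq t_Y/F=Y$ by monotonicity; conversely, if $X\leq Y$ then $t_X\vee t_Y$ lies in $X\vee Y=Y$, so $t_X\vee t_Y\leq t_Y$, and since $t_X\leq t_X\vee t_Y$ we conclude $t_X\leq t_Y$. (Using $\vee$ rather than $\wedge$ here is the one slightly non‑obvious point, and is precisely why preservation of suprema was recalled.) Then (iii) drops out by combining the two: (i) gives $t_X\wedge t_Y\leq t_{X\wedge Y}$, and applying (ii) to $X\wedge Y\leq X$ and $X\wedge Y\leq Y$ gives $t_{X\wedge Y}\leq t_X\wedge t_Y$.

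The remaining items I would deduce from (i)--(iii) and divisibility. For (v): $t_X\cdot t_{X\rightarrow Y}$ lies in the class $X\cdot(X\rightarrow Y)=X\wedge Y$, so $t_X\cdot t_{X\rightarrow Y}\leq t_{X\wedge Y}$; conversely $t_{X\rightarrow Y}\geq t_X\rightarrow t_Y$ by (i), hence $t_X\cdot t_{X\rightarrow Y}\geq t_X\cdot(t_X\rightarrow t_Y)=t_X\wedge t_Y=t_{X\wedge Y}$ by (iii). For (iv): (i) gives $t_X\rightarrow t_Y\leq t_{X\rightarrow Y}$, while the computation above shows $t_X\cdot t_{X\rightarrow Y}\leq t_{X\wedge Y}\leq t_Y$, that is, $t_{X\rightarrow Y}\leq t_X\rightarrow t_Y$. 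For the first equation of (vi), with $a\in F$ the element $a\rightarrow t_X$ lies in $(a/F)\rightarrow X=F\rightarrow X=X$ (using $1\rightarrow z=z$), so $a\rightarrow t_X\leq t_X$, and $t_X\leq a\rightarrow t_X$ holds always; the second equation then follows from divisibility, $t_X\wedge a=a\wedge t_X=a\cdot(a\rightarrow t_X)=a\cdot t_X$. Finally, (vii) follows from (vi) with $a=l$, which gives $t_X\cdot l=t_X\wedge l$: since $t_X\wedge l\in X\wedge F=X$ we get $t_X\wedge l\geq l_X$, while for every $y\in X$ the membership $(y\rightarrow t_X)\cdot(t_X\rightarrow y)\in F$ together with $l=\min F$ forces $l\leq t_X\rightarrow y$, hence $t_X\cdot l\leq y$; thus $t_X\cdot l=t_X\wedge l$ is a lower bound of $X$ and therefore equals $l_X$. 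I expect the only genuine (and still minor) difficulty to be bookkeeping the dependencies so that nothing is circular: one needs (ii) before (iii), which is why (ii) is argued through suprema rather than infima, and one needs (i) and (iii) before (iv) and (v); and in (vi)--(vii) the small trick is to fall back on divisibility and on the definition of $\theta_F$ (with $l=\min F$) rather than to work only inside the abstract quotient $\mathbf A/F$.
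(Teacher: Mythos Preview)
Your proof is correct and follows essentially the same route as the paper's: membership of $t_X\bullet t_Y$ in $X\bullet Y$ for (i), the supremum trick for the nontrivial direction of (ii), then (iii)--(v) via (i), (ii) and divisibility, and (vi)--(vii) by observing that the relevant elements land in $X$ (resp.\ $F$) and invoking divisibility. The only cosmetic differences are that you derive (v) directly (using (i) and (iii)) rather than via (iv) as the paper does, and in (vii) you bound $t_X\cdot l$ below every $y\in X$ rather than just below $l_X$; neither changes the substance.
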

\begin{proof}
    (i) It is clear that $t_X\bullet t_Y\in X\bullet Y.$ This immediately gives $t_X\bullet t_Y\leq t_{X\bullet Y}$ for any operation $\bullet$ belonging to the set $\{\vee,\wedge,\cdot,\rightarrow\}$.
    
    (ii) If $X\leq Y$ then $t_X\leq t_X\vee t_Y\leq t_{X\vee Y}=t_Y$. The converse implication is clear.
    
    (iii) Because $X\wedge Y\leq X,Y$ then due to (ii) we have $t_{X\wedge Y}\leq t_X,t_Y$ and $t_{X\wedge Y}\leq t_X\wedge t_Y$.
    
    (iv) Due to (i) and (ii) we have $t_X\cdot t_{X\rightarrow Y}\leq t_{X\cdot (X\rightarrow Y)}\leq t_Y.$ Hence $t_{X\rightarrow Y}\leq t_X\rightarrow t_Y$.

    (v) We have $t_{X\wedge Y}=t_X\wedge t_Y=t_X\cdot(t_X\rightarrow t_Y)=t_X\cdot t_{X\rightarrow Y}$.

    (vi) We have $a\cdot t_X,a\rightarrow t_X\in X,$ hence $a\cdot
    t_X,a\rightarrow t_X\leq t_X$ and consequently we obtain $a\rightarrow
    t_X\leq t_X\leq a\rightarrow t_X$. Then $a\cdot t_X=a\cdot (a\rightarrow
    t_X)= a\wedge t_X$. 

    (vii) We have $l\cdot t_x\in X$ and then $l_X\leq l\cdot t_X.$ Analogously $t_X\rightarrow l_X\in F$ and thus $l\leq t_X\rightarrow l_X$ and consequently $l\cdot t_X\leq l_X$ holds.
    \end{proof}

\begin{theor}\label{decomp}
If $\mathbf A$ is a finite hoop and $F\in\mathbf{Fil}\,\mathbf A$ then the
mapping $$\psi \colon \mathbf A/F\longrightarrow\mathbf F$$ defined by $\psi
(X)=t_X\vee l$ is a product morphism and moreover we have 
    $$\mathbf A\cong \mathbf F\ltimes_\psi(\mathbf A/ F).$$
\end{theor}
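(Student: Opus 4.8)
The plan is to verify the two assertions of the theorem separately: first that $\psi(X) = t_X \vee l$ is a product morphism from $\mathbf{A}/F$ to $\mathbf{F}$, and then that the map $\Gamma(a) = (t_a \cdot a, a/F)$ — or rather its suitably normalized variant — is an isomorphism $\mathbf{A} \to \mathbf{F} \ltimes_\psi (\mathbf{A}/F)$.

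\textbf{Step 1: $\psi$ is a product morphism.} I need (pM1) and (pM2). Condition (pM1) is immediate since $\psi(1) = t_{1} \vee l = 1 \vee l = 1$ (the top class has top element $1$). For (pM2) I must show $\psi(X) \cdot \psi(Y) = \psi(X \cdot Y) = \psi(X) \wedge \psi(Y) = \psi(X \wedge Y)$. Since by Lemma~\ref{lem:cases}(iii) we have $X \wedge Y$ corresponds to $t_X \wedge t_Y$, and since $l$ is idempotent, $\psi$ restricted to filter behaviour should collapse nicely; the key computation is $(t_X \vee l) \wedge (t_Y \vee l) = (t_X \wedge t_Y) \vee l$ using distributivity of the induced lattice (which is available by the Galatos–Tsinakis lemma and holds in finite hoops), and noting $t_{X \wedge Y} \vee l = (t_X \wedge t_Y) \vee l = \psi(X \wedge Y)$. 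For the product clauses, since $\psi(X), \psi(Y) \geq l$ and $l \in F$ is the bottom of $F$, I expect $\psi(X)$ and $\psi(Y)$ to be idempotent (any element above an idempotent $l$ in a filter need not be idempotent in general, so I should instead argue directly): we have $(t_X \vee l)\cdot(t_Y \vee l) = (t_X \cdot t_Y) \vee (t_X \cdot l) \vee (l \cdot t_Y) \vee (l \cdot l)$ by Lemma~1(iv), and using Lemma~\ref{lem:cases}(vii) that $t_X \cdot l = l_X \leq t_X$ and $l \cdot l = l$, this reduces to $(t_X \cdot t_Y) \vee l$. Then I compare $t_X \cdot t_Y$ with $t_{X \cdot Y}$: by Lemma~\ref{lem:cases}(i), $t_X \cdot t_Y \leq t_{X\cdot Y}$, and after joining with $l$ I claim equality of $(t_X \cdot t_Y)\vee l$ with $t_{X\cdot Y}\vee l$; this requires showing $t_{X \cdot Y} \leq (t_X \cdot t_Y) \vee l$, which should follow because $t_{X\cdot Y}$ and $t_X \cdot t_Y$ lie in the same class $X \cdot Y$, hence differ by an element of $F$ above $l$. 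Finally I reconcile the product and meet values using Lemma~4 ($x \cdot y = x \wedge y$ for idempotent $x$) once idempotency of $\psi(X)$ is in hand, or directly show $(t_X \cdot t_Y)\vee l = (t_X \wedge t_Y)\vee l$ by the same class-argument since $t_X \cdot t_Y \leq t_X \wedge t_Y$ and both lie in $X \wedge Y = X \cdot Y$... here I should be careful that $X \cdot Y = X \wedge Y$ in $\mathbf{A}/F$, which holds because $\mathbf{A}/F$'s classes, viewed through their top elements, satisfy $t_X \cdot t_Y$ and $t_X \wedge t_Y$ generating the same class.

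\textbf{Step 2: The isomorphism.} Define $\Gamma \colon A \to |\mathbf{F}\ltimes_\psi(\mathbf{A}/F)| = \sum_{X \in \mathbf{A}/F}(\psi(X)]$ by $\Gamma(a) = (\,(t_{a/F} \vee l)\cdot(\text{something}),\, a/F\,)$; concretely the natural candidate built from the adjunction discussed before Definition~\ref{def:prod-morph} is $\Gamma(a) = ((t_X \rightarrow a)\wedge(t_X \vee l),\, a/F)$ where $X = a/F$. By the motivational observation in the text, $t_X \rightarrow \mathord{-}$ gives a bijection $(t_X \vee l] \to F_X$, so the first coordinate lands in $(\psi(X)] = (t_X \vee l]$, and $\Gamma$ is a bijection onto the disjoint union. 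Then I must check $\Gamma$ preserves $\cdot$, $\rightarrow$, and $1$. Preservation of $1$ is clear. For $\cdot$: I compute $\Gamma(a)\cdot\Gamma(b) = (u_a \cdot u_b, (a\cdot b)/F)$ where $u_a = (t_X \rightarrow a)\wedge(t_X \vee l)$, and compare with $\Gamma(a \cdot b) = (u_{ab}, (a\cdot b)/F)$; the first-coordinate identity $u_a \cdot u_b = u_{ab}$ should follow from residuation identities (H2) and divisibility together with the fact that $t_{a/F}\cdot t_{b/F}$ and $t_{(ab)/F}$ generate the same class. For $\rightarrow$: this is where the definition of $f$-product's arrow, $(a,x)\rightarrow(b,y) = (\psi(x\rightarrow y)\wedge(a\rightarrow b), x\rightarrow y)$, must match $\Gamma(a \rightarrow b)$; I expect to need Lemma~\ref{lem:cases}(iv),(v),(vi) crucially, plus Lemma~1(iii),(v) for interactions of $\rightarrow$ with $\wedge$ and $\vee$.

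\textbf{Main obstacle.} The hard part will be Step 2's verification that $\Gamma$ preserves $\rightarrow$: unwinding the first coordinate of $\Gamma(a\rightarrow b)$, namely $(t_{(a\rightarrow b)/F} \rightarrow (a \rightarrow b)) \wedge (t_{(a\rightarrow b)/F} \vee l)$, and matching it against $\psi((a/F)\rightarrow(b/F))\wedge(u_a \rightarrow u_b)$ requires careful manipulation with residuals, the "class modulo $F$" arguments (two elements of the same $\theta_F$-class differ by a factor in $F$), and distributivity; in particular I anticipate needing the identity $t_{X\rightarrow Y} = t_X \rightarrow t_Y$ (Lemma~\ref{lem:cases}(iv)) and the behaviour of $t_X \rightarrow \mathord{-}$ as a section of $t_X \cdot \mathord{-}$. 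I also expect a subtlety in showing that the first coordinate of $\Gamma(a)\rightarrow\Gamma(b)$ genuinely lies below $\psi$ of the second coordinate, i.e.\ that the pair is a legitimate element of the disjoint union, but this is guaranteed abstractly once we know $\Gamma$ is a bijection onto $\sum_X(\psi(X)]$ and the $f$-product operations are internal (already proved in the earlier theorem). A clean way to organize Step 2 is to first establish that $\Gamma$ is a bijection (using the motivational observation, now stated as a small lemma), then prove $\cdot$-preservation, and finally derive $\rightarrow$-preservation either directly or — more elegantly — by using that in a hoop the arrow is determined by $\cdot$ and $\leq$ via residuation, so that a $\cdot$-and-$\leq$-preserving bijection between hoops is automatically an isomorphism; this last route would sidestep the messy arrow computation entirely, reducing the obstacle to checking $\Gamma$ and $\Gamma^{-1}$ are order-preserving, which follows from the order description $(a,x)\leq(b,y) \iff a \leq b \ \&\ x \leq y$ together with Lemma~\ref{lem:cases}(ii).
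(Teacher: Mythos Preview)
Your Step~1 has a real gap. The claim that $X\cdot Y = X\wedge Y$ in $\mathbf A/F$ is false in general (any non-idempotent quotient is a counterexample), and your ``class argument'' for $t_{X\cdot Y}\le (t_X\cdot t_Y)\vee l$ is not substantiated: knowing that two elements are $\theta_F$-congruent does not by itself bound one by the other joined with $l$. The clean route, which the paper takes, is to first establish $\psi(X)\cdot\psi(Y)=\psi(X)\wedge\psi(Y)$ directly via Lemma~\ref{lem:cases}(vi): since $\psi(X)\in F$, one has $\psi(X)\cdot t_Y=\psi(X)\wedge t_Y$ and $\psi(X)\cdot l=l$, whence by distributivity
\[
\psi(X)\cdot\psi(Y)=(\psi(X)\wedge t_Y)\vee l=(\psi(X)\wedge t_Y)\vee(\psi(X)\wedge l)=\psi(X)\wedge\psi(Y).
\]
Then the sandwich
\[
\psi(X)\cdot\psi(Y)=\psi(X)\wedge\psi(Y)=\psi(X\wedge Y)\ge\psi(X\cdot Y)\ge(t_X\cdot t_Y)\vee l=\psi(X)\cdot\psi(Y)
\]
closes, forcing all the (pM2) equalities simultaneously. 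This also yields idempotency of $\psi(X)$ as a special case, resolving the concern you flagged.

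For Step~2, your formula simplifies dramatically: using Lemma~\ref{lem:cases}(vi) and distributivity,
\[
(t_X\to a)\wedge(t_X\vee l)=\bigl((t_X\to a)\wedge t_X\bigr)\vee\bigl((t_X\to a)\wedge l\bigr)=a\vee l,
\]
so your $\Gamma$ is exactly the paper's map $\Omega(a)=(a\vee l,\,a/F)$, with inverse $\Omega^{-1}(u,X)=u\wedge t_X$. With this simpler form, $\cdot$-preservation is a one-line distributivity check. Your residuation shortcut---that a bijection between hoops preserving $\cdot$ and order in both directions automatically preserves $\to$---is correct and genuinely streamlines the argument. The paper instead verifies $\to$-preservation for $\Omega^{-1}$ by a direct computation using Lemma~\ref{lem:cases}(iv),(vi); both approaches work, but yours is shorter once you have the simplified $\Omega$.
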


\begin{proof}
First we show that $\psi$ is a product morphism. It is clear that $\psi(F)=l\vee
t_F=1$. Lemma~\ref{lem:cases}(ii) gives monotonicity of
$\psi$, and using the full force of Lemma~\ref{lem:cases}, we obtain 
    \begin{eqnarray*}
        \psi(X)\cdot\psi (Y)&=&\psi (X)\cdot (t_Y\vee l)= (\psi (X)\cdot t_Y) \vee (\psi (X)\cdot l)\\
        &=& (\psi (X)\wedge t_Y)\vee l = (\psi (X)\wedge t_Y)\vee (\psi (X)\wedge l)\\
        &=& \psi (X)\wedge (t_Y\vee l) =\psi(X)\wedge\psi (Y)\\
        &=& (t_X\vee l)\wedge (t_Y\vee l) = (t_X\wedge t_Y)\vee l\\
        &=&t_{X\wedge Y}\vee l =\psi (X\wedge Y)\geq \psi (X\cdot Y)= t_{X\cdot Y}\vee l\\
        &\geq & t_X\cdot t_Y\vee l = (t_X\vee l)\cdot (t_Y\vee l)=\psi (X)\cdot \psi (Y).
    \end{eqnarray*}
Define the mapping
$$
\textstyle \Omega\colon A\longrightarrow \sum_{X\in \mathbf A/F}(\psi (X)]
$$
by putting
$\Omega (a)=(a\vee l,a/F)$.
It is clear that $a\vee l\leq t_{a/F}\vee l =\psi (a/F)$ and hence $\Omega$
is well defined. 

\begin{claim}
The mapping $\Omega$ is a bijection such that $\Omega^{-1}(a,X)=a\wedge t_X$.
\end{claim}
    \begin{proof}
        If $a\in A$ then
        \begin{eqnarray*}
            \Omega^{-1}(\Omega(a))&=&\Omega^{-1}(a\vee l,a/F) = (a\vee l)\wedge t_{a/F}\\
            &=&(a\wedge t_{a/F})\vee (l\wedge t_{a/F})= a \vee l_{a/F}\\
            &=& a.
        \end{eqnarray*}
        
If $(a,X)\in \sum_{X\in \mathbf A/F} (\psi (X)]$ then $l\leq a\leq \psi (X)$ and
        \begin{eqnarray*}
            \Omega(\Omega^{-1}(a,X)) &=& \Omega (a\wedge t_X) = ((a\wedge t_X)\vee l, (a\wedge t_X)/F)\\
            &=& ((a\vee l)\wedge (t_X\vee l),X) = (a\wedge \psi (X),X)\\
            &=&(a,X).
        \end{eqnarray*}
This finishes the proof of the claim.
\end{proof}

Next, we need to show that $\Omega$ is a homomorphism. 
Take $a,b\in A$ and calculate
    \begin{eqnarray*}
        \Omega(a)\cdot\Omega(b) &=&(a\vee l,a/F)\cdot (b\vee l, b/F)= ((a\vee l)\cdot(b\vee l),(a/F)\cdot (b/F))\\
        &=&((a\cdot b) \vee (a\cdot l)\vee (l\cdot b)\vee l, (a\cdot b)/F)\\
         &=&((a\cdot b) \vee l, (a\cdot b)/F)=\Omega(a\cdot b)\\
    \end{eqnarray*}
showing that $\Omega$ preserved products.    

Arrow preservation requires a little more work.
First, by Lemma~\ref{lem:cases}(vi) we get that
$a\leq \psi(X)=t_X\vee l$ implies $a\vee b \leq t_X\vee l\vee b=t_X\vee b$,
for any $(a,X),(b,Y)\in \sum_{X\in \mathbf A/F} (\psi (X)]$.
Hence
    \begin{eqnarray*}
        (a\wedge t_X)\rightarrow b &=& ((a\wedge t_X)\vee b)\rightarrow b\\
        &=&((a\vee b)\wedge (t_X\vee b))\rightarrow b\\
        &=& (a\vee b)\rightarrow b =a\rightarrow b.
    \end{eqnarray*}
Using this, in the third equality below, we get
    \begin{eqnarray*}
        \Omega^{-1}(a,X)\rightarrow \Omega^{-1}(b,Y) &=& (a\wedge t_X)\rightarrow (b\wedge t_Y) \\
        &=& ((a\wedge t_X)\rightarrow b) \wedge ((a\wedge t_X)\rightarrow t_Y)\\
        &=& (a\rightarrow b) \wedge ((a\cdot t_X)\rightarrow t_Y)\\
         &=& (a\rightarrow b) \wedge (a\rightarrow( t_X\rightarrow t_Y)\\
           &=& (a\rightarrow b) \wedge (a\rightarrow t_{X\rightarrow Y})\\
           &=& (a\rightarrow b) \wedge  t_{X\rightarrow Y}\\
           &=& \psi(X\rightarrow Y)\wedge (a\rightarrow b)\wedge t_{X\rightarrow Y}\\
         &=&\Omega^{-1}((\psi(X\rightarrow Y)\wedge (a\rightarrow b),X\rightarrow Y)\\
           &=&\Omega^{-1}((a,X)\rightarrow (b,Y)).
    \end{eqnarray*}
This equality implies
$\Omega^{-1}(\Omega(a)\rightarrow\Omega (b)) = \Omega^{-1}(\Omega(a))\rightarrow
\Omega^{-1}(\Omega(b))=a\rightarrow b$ and, of course, also 
$$
\Omega(a)\rightarrow \Omega (b)=\Omega(a\rightarrow b)
$$
showing that $\Omega$ preserves arrow as well. 
\end{proof}

\begin{defin}
A hoop $\mathbf A$ is irreducible if $\mathbf A \cong \mathbf B\ltimes_f \mathbf
C$, for an arbitrary product morphism $f\colon \mathbf C\longrightarrow \mathbf
B$, implies that $\mathbf B$ or $\mathbf C$ is the trivial hoop.
If a hoop is not irreducible, we call it reducible.
\end{defin}

\begin{theor}\label{thm:irr-simp}
A finite hoop is irreducible if and only if it is simple.
\end{theor}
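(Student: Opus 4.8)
The plan is to identify ``simple'' with the condition that $\{1\}$ and $A$ are the only filters of $\mathbf A$ (equivalently, that the only congruences of $\mathbf A$ are the diagonal and the full one, by the filter--congruence correspondence recalled in the Introduction), and to prove the two implications separately. The implication ``irreducible $\Rightarrow$ simple'' is handled by contraposition and is where Theorem~\ref{decomp} does all the work: if $\mathbf A$ has a filter $F$ with $\{1\}\subsetneq F\subsetneq A$, then $\mathbf A\cong\mathbf F\ltimes_\psi(\mathbf A/F)$, and here $\mathbf F$ is nontrivial because $F\neq\{1\}$, while $\mathbf A/F$ is nontrivial because $F\neq A$; hence $\mathbf A$ is reducible. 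No computation beyond quoting Theorem~\ref{decomp} is needed for this half.

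For ``simple $\Rightarrow$ irreducible'', suppose $\mathbf A\cong\mathbf B\ltimes_f\mathbf C$ for some product morphism $f\colon\mathbf C\longrightarrow\mathbf B$. Since $f(1)=1$ by (pM1), we have $(f(1)]=B$, so the ``$\mathbf B$-layer'' $G:=\{(b,1)\mid b\in B\}$ is contained in $|\mathbf B\ltimes_f\mathbf C|$. Using the description of the order on an $f$-product established above (namely $(a,x)\leq(b,y)$ iff $a\leq b$ and $x\leq y$) together with the fact that $\cdot$ is computed coordinatewise, one checks at once that $G$ is a filter: it is upward closed because $1\leq y$ forces $y=1$, and it is closed under $\cdot$ since $(b_1,1)\cdot(b_2,1)=(b_1\cdot b_2,1)$. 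Transporting $G$ across the isomorphism and invoking simplicity of $\mathbf A$, we get that $G$ is either $\{(1,1)\}$ or all of $|\mathbf B\ltimes_f\mathbf C|$. In the first case $B=\{1\}$, so $\mathbf B$ is trivial. In the second case every element of $\mathbf B\ltimes_f\mathbf C$ has second coordinate $1$, which forces $C=\{1\}$ (as $(f(x),x)$ lies in $\sum_{x\in C}(f(x)]$ for each $x\in C$), so $\mathbf C$ is trivial. Either way $\mathbf A$ is irreducible.

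I do not expect a genuine obstacle here: the only substantive point is the observation that the $\mathbf B$-layer $B\times\{1\}$ is always a filter of an $f$-product --- this is precisely the internal incarnation of the filter $F$ appearing in the decomposition Theorem~\ref{decomp} --- so that ``having a proper nontrivial filter'' and ``being a nontrivial $f$-product'' are two faces of the same phenomenon. The one place needing a word of care is the degenerate case in which $\mathbf A$ is trivial, which is consistent with both notions under the stated reading of ``simple''.
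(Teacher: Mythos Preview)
Your proof is correct and follows essentially the same route as the paper: both directions hinge on the fact that $\{(b,1)\mid b\in B\}$ is a filter of $\mathbf B\ltimes_f\mathbf C$, with Theorem~\ref{decomp} handling the contrapositive of ``irreducible $\Rightarrow$ simple''. The only cosmetic difference is that the paper obtains the filter property by noting that the second projection $\pi_2\colon\mathbf B\ltimes_f\mathbf C\to\mathbf C$ is a surjective homomorphism (so the set in question is the kernel class of $(1,1)$), whereas you verify (F1) and (F2) directly from the coordinatewise description of the order and product.
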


\begin{proof}
Theorem~\ref{decomp} shows that if a finite hoop is not simple then it is
reducible. The converse implication follows from the fact that $$\pi_2\colon
\mathbf A\ltimes_f\mathbf B\longrightarrow \mathbf B,$$ defined by
$\pi_2(a,x)=x$ for any $(a,x)\in |\mathbf A\ltimes_f\mathbf B|,$ is a surjective
homomorphism and consequently the set $\{(a,1)\mid a\in A\}\subseteq |\mathbf
A\ltimes_f\mathbf B|$ is a nontrivial filter if both $\mathbf A$ and $\mathbf B$
are nontrivial hoops. 
\end{proof}

The next theorem comes from~\cite{BlFef}. 

\begin{theor}[Blok, Ferreirim]\label{thm:fin-simp}
Simple finite hoops are precisely finite MV-chains.
\end{theor}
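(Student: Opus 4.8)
Simple finite hoops are precisely the finite MV-chains (Theorem, Blok--Ferreirim).

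The plan is to prove both inclusions of this characterization using the structure theory of finite hoops, in particular the ordinal-sum decomposition. First I would recall (or re-establish) that every finite hoop decomposes as a finite ordinal sum of nontrivial \emph{Wajsberg hoops}, i.e. hoops satisfying $(x\rightarrow y)\rightarrow y = (y\rightarrow x)\rightarrow x$; this is the finite case of the Blok--Ferreirim decomposition. A nontrivial bounded Wajsberg hoop is term-equivalent to an MV-algebra, and in the finite case the \emph{totally ordered} nontrivial Wajsberg hoops that are bounded are exactly the finite MV-chains $\mathbf{\L}_n$. Since a finite hoop is linearly ordered iff it is a subdirectly irreducible / directly indecomposable (in the right sense) ordinal sum of chains, and since simplicity is a very strong condition, I expect the argument to pin down the number of summands to one.

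The key steps, in order. \textbf{Step 1:} Show a simple finite hoop is a chain. A finite hoop is a (finite) ordinal sum $\mathbf{C}_1\oplus\cdots\oplus\mathbf{C}_k$ of nontrivial Wajsberg hoops, and it is well known that the filters of an ordinal sum $\mathbf{C}_1\oplus(\mathbf{C}_2\oplus\cdots)$ include $C_2\oplus\cdots$ (the ``upper part'' beyond the first summand), which is a proper nontrivial filter whenever $k\ge 2$. Hence a simple finite hoop has exactly one summand, so $\mathbf A$ is a nontrivial finite Wajsberg hoop. \textbf{Step 2:} Show $\mathbf A$ is a chain. A nontrivial finite Wajsberg hoop that is not totally ordered would have a nontrivial Boolean-skeleton / a nontrivial pair of complemented idempotents (using Lemma~\ref{lem:cases}-style facts: in a finite hoop the idempotents form a distributive lattice, and any idempotent $e\ne 1$ with a complement yields a direct product decomposition, hence a proper filter), contradicting simplicity; so $\mathbf A$ is a finite Wajsberg chain. \textbf{Step 3:} Identify finite Wajsberg chains. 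A finite Wajsberg chain is bounded (it has a least element $0$), hence term-equivalent to a finite MV-chain; by the classification of finite MV-algebras this is some $\mathbf{\L}_n = \{0,\tfrac1n,\dots,\tfrac{n-1}{n},1\}$, which is simple. \textbf{Step 4 (converse):} Conversely every finite MV-chain $\mathbf{\L}_n$ is simple as a hoop, since its only filters are $\{1\}$ and the whole algebra --- a filter containing any $a<1$ contains $a^m$ for all $m$, and in $\mathbf{\L}_n$ the products $a\cdot a\cdot\cdots$ eventually reach $0$, forcing the filter to be everything.

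The main obstacle, and the place where I would be most careful, is \textbf{Step 2}: ruling out nontrivial simple finite Wajsberg hoops that are not chains. The clean way is to invoke that a directly indecomposable bounded commutative residuated lattice (MV-algebra) is totally ordered once it is simple --- equivalently, that in a simple MV-algebra the only idempotents are $0$ and $1$ and the order is linear --- but since the paper works in the hoop signature I would instead argue directly: if $\mathbf A$ is a finite simple hoop, it has a least element (being finite), it has no proper nontrivial filter, so by Theorem~\ref{thm:irr-simp} it is irreducible, in particular not a nontrivial direct product, so its idempotent lattice $\mathbf{Id}\,\mathbf A$ has no nontrivial complemented elements; combined with Step 1 (one Wajsberg summand) and divisibility this forces linearity. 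Everything else is bookkeeping with ordinal sums and the known classification of finite MV-algebras, which I would cite rather than reprove.
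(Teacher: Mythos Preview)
The paper does not supply its own proof of this theorem; it is quoted as a result of Blok and Ferreirim and simply cited to \cite{BlFef}. There is therefore nothing in the paper to compare your argument against.

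Your outline follows the standard Blok--Ferreirim route, but the step ordering is tangled. The ordinal-sum decomposition into Wajsberg components that you invoke in Step~1 is a theorem about \emph{totally ordered} hoops, yet you defer linearity to Step~2; as written, Step~1 has no force for an arbitrary finite hoop. The clean flow is: a simple hoop is subdirectly irreducible; by Blok--Ferreirim, every subdirectly irreducible hoop is totally ordered; now the ordinal-sum decomposition into totally ordered Wajsberg hoops applies; the union of all summands above the first is a proper filter unless there is only one summand; and a finite totally ordered Wajsberg hoop with a least element is term-equivalent to a finite MV-chain, which is simple. Your Step~2 as stated (ruling out nontrivial complemented idempotents via Theorem~\ref{thm:irr-simp}) does not by itself yield linearity of the order---absence of a nontrivial direct-product decomposition is strictly weaker than being a chain---so that step would need to be replaced by the subdirect-irreducibility argument just described. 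The converse direction (Step~4) is fine.
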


\begin{theor}
(i) For any finite hoop $\mathbf A$ there exists a (finite) sequences of finite
MV-chains $(\mathbf M_{i_1},\mathbf M_{i_2},\dots,\mathbf M_{i_n})$ and
appropriate product morphisms $(f_1,\dots,f_{n-1})$ such that 
$$
\mathbf A\cong \mathbf M_{i_1}\ltimes_{f_1}(\mathbf
    M_{i_2}\ltimes_{f_2}(\mathbf M_{i_3}\dots\ltimes_{f_{n-2}}(\mathbf
    M_{i_{n-1}}\ltimes_{f_{n-1}}\mathbf M_{i_n})\dots)).
$$
    
(ii) For any finite hoop $\mathbf A$ there exists a (finite) sequences of
finite MV-chains $(\mathbf M_{i_1},\mathbf M_{i_2},\dots,\mathbf M_{i_n})$ and
appropriate product morphisms $(g_1,\dots,g_{n-1})$ such that 
$$
\mathbf A\cong (((\mathbf M_{i_1}\ltimes_{g_1}\mathbf
M_{i_2})\ltimes_{g_2}\mathbf M_{i_3})\dots\ltimes_{g_{n-2}}\mathbf
M_{i_{n-1}})\ltimes_{g_{n-1}}\mathbf M_{i_n}.
$$
\end{theor}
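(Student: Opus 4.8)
The plan is to prove (i) and (ii) together, by strong induction on $|\mathbf A|$, treating them as dual statements. If $\mathbf A$ is trivial or simple then $\mathbf A$ is already a finite MV-chain (by Theorem~\ref{thm:fin-simp} in the simple case), and both claims hold with $n=1$; so assume from now on that $\mathbf A$ is nontrivial and not simple.

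For (i) I would take $F$ to be a minimal element of the (finite, nonempty) set of nontrivial filters of $\mathbf A$. Because $F$ is upward closed, the order of the subalgebra $\mathbf F$ is the restriction of the order of $\mathbf A$, and from this one checks immediately that the filters of $\mathbf F$ are exactly the filters of $\mathbf A$ lying inside $F$; by minimality these are only $\{1\}$ and $F$, so $\mathbf F$ is simple and hence, by Theorem~\ref{thm:fin-simp}, isomorphic to a finite MV-chain $\mathbf M_{i_1}$. Moreover $F\neq A$ (else $\mathbf A=\mathbf F$ would be simple), and since $F=1/\theta_F$ has at least two elements while every other $\theta_F$-class is nonempty, $|A|=\sum_{X\in\mathbf A/F}|X|\geq|\mathbf A/F|+(|F|-1)>|\mathbf A/F|$. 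Now Theorem~\ref{decomp} gives $\mathbf A\cong\mathbf F\ltimes_\psi(\mathbf A/F)$, while the induction hypothesis writes $\mathbf A/F$ as a right-nested $f$-product $\mathbf M_{i_2}\ltimes_{f_2}(\cdots\ltimes_{f_{n-1}}\mathbf M_{i_n})$ of finite MV-chains. After transporting $\psi$ along the isomorphisms $\mathbf F\cong\mathbf M_{i_1}$ and $\mathbf A/F\cong\mathbf M_{i_2}\ltimes_{f_2}(\cdots)$ the two factors of the $f$-product become $\mathbf M_{i_1}$ and that right-nested product, and we obtain $\mathbf A\cong\mathbf M_{i_1}\ltimes_{f_1}(\mathbf M_{i_2}\ltimes_{f_2}(\cdots\ltimes_{f_{n-1}}\mathbf M_{i_n}))$, as required.

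For (ii) I would run the dual argument with a maximal proper filter $G$ in place of the minimal nontrivial one. Since $\mathbf A$ is not simple one can choose $G$ with $\{1\}\subsetneq G\subsetneq A$; by the lattice isomorphism $\mathbf{Fil}\,\mathbf A\cong\mathbf{Con}\,\mathbf A$ and the ordinary correspondence theorem, $\mathbf{Con}(\mathbf A/G)$ is the interval of congruences above $\theta_G$, which has exactly two elements because $G$ is a coatom, so $\mathbf A/G$ is simple and hence a finite MV-chain $\mathbf M_{i_n}$. Here $|G|<|\mathbf A|$, so Theorem~\ref{decomp} gives $\mathbf A\cong\mathbf G\ltimes_\psi(\mathbf A/G)$ with $\mathbf G$ presented, by the induction hypothesis, as a left-nested product $((\mathbf M_{i_1}\ltimes_{g_1}\mathbf M_{i_2})\cdots\ltimes_{g_{n-2}}\mathbf M_{i_{n-1}})$ of finite MV-chains; transporting $\psi$ along $\mathbf G\cong((\cdots))$ and $\mathbf A/G\cong\mathbf M_{i_n}$ exhibits $\mathbf A$ in the left-nested form of (ii) with $g_{n-1}=\psi$. (Alternatively, once (i) is available, (ii) follows purely formally from the standard fact that all bracketings of an associative operation coincide up to isomorphism: iterating the associativity isomorphism of the theorem on left- and right-associated pairs of product morphisms rebrackets the right-nested product of (i) into the left-nested one of (ii), with the same list of MV-chains in the same order.)

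Every recursive call is made on a strictly smaller hoop ($\mathbf A/F$ in (i), $\mathbf G$ in (ii)), so the induction terminates. Apart from Theorems~\ref{decomp} and~\ref{thm:fin-simp}, the only ingredients are the two elementary filter-lattice remarks above and the following isomorphism-invariance of the $f$-product, which is the single point where a little care is needed: if $\phi_1\colon\mathbf B\to\mathbf B'$ and $\phi_2\colon\mathbf C\to\mathbf C'$ are hoop isomorphisms and $f\colon\mathbf C\to\mathbf B$ is a product morphism, then $\phi_1\circ f\circ\phi_2^{-1}\colon\mathbf C'\to\mathbf B'$ is again a product morphism and $(b,c)\mapsto(\phi_1(b),\phi_2(c))$ is a hoop isomorphism $\mathbf B\ltimes_f\mathbf C\to\mathbf B'\ltimes_{\phi_1\circ f\circ\phi_2^{-1}}\mathbf C'$. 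This follows at once from (pM1), (pM2), the definitions of the operations of an $f$-product and the membership rule $(b,c)\in|\mathbf B\ltimes_f\mathbf C|\iff b\leq f(c)$, together with the fact that hoop isomorphisms preserve $\cdot$, $\rightarrow$, $\wedge$ and $\leq$. I expect this bookkeeping — rather than the induction, which is essentially forced by Theorem~\ref{decomp} — to be the only mildly delicate part of the argument.
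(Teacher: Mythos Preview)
Your argument is correct and is essentially a careful unpacking of the paper's one-line proof, which simply reads ``Combining Theorems~\ref{decomp}, \ref{thm:irr-simp} and~\ref{thm:fin-simp}.'' The only noteworthy difference is that by choosing the filter to be minimal (for (i)) or maximal (for (ii)) you get the desired nesting directly from Theorem~\ref{decomp} and Theorem~\ref{thm:fin-simp} alone, so neither Theorem~\ref{thm:irr-simp} nor the associativity theorem is actually needed; the paper's citation of Theorem~\ref{thm:irr-simp} suggests the alternative route you mention parenthetically (decompose arbitrarily until all factors are irreducible, then rebracket), but your direct induction is cleaner.
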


\begin{proof}
Combining Theorems~\ref{decomp}, \ref{thm:irr-simp} and~\ref{thm:fin-simp}. 
\end{proof}  

It is clear that a finite hoop $\mathbf A$, whose maximal chain of idempotent
elements has length $n+1$, can be decomposed into exactly $n$ MV-chains. The
next theorem gives 
a simple description of an $f$-product such that one of the factors is a finite
MV-chain. To state it conveniently, we need some technicalities.
Let $\mathbf A$ be a finite hoop, and $\mathbf M$ be a finite MV-chain.
Note that $\mathbf{Id}\, \mathbf M = \{0,1\}$.
Let $P_{\mathbf{A}}^{\mathbf{M}}$ be the set of all product morphisms from $\mathbf{A}$ to
$\mathbf{M}$, and let $P_{\mathbf{M}}^{\mathbf{A}}$ be the set of all product
morphisms from $\mathbf{M}$ to $\mathbf{A}$.

\begin{theor}
Let $\mathbf A$ be a finite hoop. There are natural bijections
$$
\nu\colon P_{\mathbf{A}}^{\mathbf{M}}\to \mathbf{Id}\, \mathbf A,
$$
and
$$
\mu\colon P_{\mathbf{M}}^{\mathbf{A}}\to \mathbf{Id}\, \mathbf A.
$$
\end{theor}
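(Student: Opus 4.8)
The plan is to analyze product morphisms into and out of a finite MV-chain $\mathbf M$ by exploiting the fact that $\mathbf{Id}\,\mathbf M=\{0,1\}$, together with the observation (already recorded after Definition~\ref{def:prod-morph}) that the image of any product morphism consists of idempotents.

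First I would construct $\nu$. Let $f\in P_{\mathbf A}^{\mathbf M}$. Since $f(A)\subseteq\mathbf{Id}\,\mathbf M=\{0,1\}$, the set $K_f=f^{-1}(1)=\{a\in A\mid f(a)=1\}$ determines $f$ completely. Using (pM1), (pM2), and monotonicity, $K_f$ is nonempty (it contains $1$), upward closed, and closed under $\cdot$ (equivalently under $\wedge$, on $K_f$), so $K_f\in\mathbf{Fil}\,\mathbf A$. Moreover $f$ being a product morphism forces $f$ to ``collapse'' the complement of $K_f$ to $0$ in a way compatible with $\wedge$: in particular $K_f$ must be a \emph{principal} filter, because in a finite hoop every filter is principal, generated by an idempotent; indeed the least element of $K_f$ is then forced to be idempotent, since $f(e)=1$ while $f$ preserves $\cdot$ and $\wedge$ and $e$ is the bottom of an upward-closed set on which $f$ is constant $1$. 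Define $\nu(f)=e_f$, the generator of $K_f$, an element of $\mathbf{Id}\,\mathbf A$. Conversely, given $e\in\mathbf{Id}\,\mathbf A$, define $f_e\colon A\to M$ by $f_e(a)=1$ if $a\geq e$ and $f_e(a)=0$ otherwise; I would check (pM1)--(pM2) directly, the key point being that for $x\geq e$, $y\geq e$ one has $x\cdot y\geq e\cdot e=e$ and $x\wedge y\geq e$, while if either of $x,y$ fails to lie above $e$ then so does $x\cdot y\leq x\wedge y\leq\min(x,y)$ — here one uses that $\{a: a\geq e\}$ is a filter and that its complement is a down-set closed under the relevant operations, together with Lemma~4 ($e\cdot y=e\wedge y$). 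Then $\nu$ and $e\mapsto f_e$ are mutually inverse.

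Next I would construct $\mu$. Let $f\in P_{\mathbf M}^{\mathbf A}$. Since $\mathbf M$ is a chain $1=m_0>m_1>\dots>m_k=0$, monotonicity of $f$ gives a decreasing chain $1=f(m_0)\geq f(m_1)\geq\dots\geq f(0)$ in $\mathbf A$, and by (pM2) each $f(m_i)$ is idempotent and $f(m_i)\cdot f(m_j)=f(m_i\cdot m_j)$. The crucial observation is that in the MV-chain $\mathbf M$, $m_1\cdot m_1$ equals $0$ unless $m_1=1$ (since $\mathbf M$ is the $(k+1)$-element MV-chain and its second-largest element has order $k$... more precisely $m_1^{(k)}=0$); iterating (pM2), $f(m_1)^{(k)}=f(m_1^{(k)})=f(0)$, but $f(m_1)$ is idempotent so $f(m_1)^{(k)}=f(m_1)$, forcing $f(m_1)=f(0)$ and hence $f$ is constant on $M\setminus\{1\}$ with common value $f(0)\in\mathbf{Id}\,\mathbf A$. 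So $f$ is exactly the map $\sigma$-type morphism from Example~8 relative to $e:=f(0)$: namely $f(1)=1$ and $f(m)=e$ for $m\neq 1$. Define $\mu(f)=f(0)$. Conversely, for $e\in\mathbf{Id}\,\mathbf A$ define $g_e\colon M\to A$ by $g_e(1)=1$, $g_e(m)=e$ for $m\neq 1$; check (pM1)--(pM2) using that $e$ is idempotent, $e\cdot e=e\wedge e=e$, and that $m\cdot m'\neq 1$ whenever $m$ or $m'$ is $\neq 1$ (so the product and meet land in the ``$e$'' block), again invoking Lemma~4. These are mutually inverse.

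The main obstacle I anticipate is the forcing argument showing that a product morphism out of $\mathbf M$ is constant on $M\setminus\{1\}$ (and dually that $K_f$ is principal with idempotent generator); this is where the specific arithmetic of finite MV-chains — that a proper element has finite additive order and so its iterated product hits $0$ — must be combined with the idempotency of the $f$-images. Once that rigidity is established, verifying (pM1)--(pM2) for the explicit maps $f_e$ and $g_e$ is routine, using Lemma~4 to replace $e\cdot y$ by $e\wedge y$ throughout, and naturality of $\nu,\mu$ (compatibility with hoop homomorphisms in the evident sense) follows because both bijections are described purely in terms of preimages of $1$ / values at $0$, which are preserved under composition with homomorphisms.
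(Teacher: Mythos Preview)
Your proposal is correct and follows essentially the same route as the paper: for $\nu$ you take the least element of the filter $f^{-1}(1)$ as the associated idempotent and invert by sending $e$ to the characteristic map of $[e,1]$; for $\mu$ you use that every image $f(m)$ is idempotent and that $m^n=0$ in a finite MV-chain to force $f$ to be constant on $M\setminus\{1\}$, then set $\mu(f)=f(0)$ and invert via the $\sigma$-type map of Example~8. The only cosmetic difference is that the paper argues directly for an arbitrary $x\in M\setminus\{1\}$ rather than reducing first to the coatom $m_1$, and it does not attempt to interpret ``natural'' functorially as you do in your final remark.
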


\begin{proof}
Let $\psi\in P_{\mathbf{A}}^{\mathbf{M}}$.
Then, $\psi^{-1}(1)$ is a filter; by finiteness it has the least element,
say $e_\psi$, which is idempotent. If
$\psi'\colon \mathbf A\longrightarrow \mathbf M$ is a product morphism
and $\psi'\neq\psi$, then $e_{\psi'}\neq e_\psi$, so the natural map
$\nu\colon P_{\mathbf{A}}^{\mathbf{M}}\to \mathbf{Id}\, \mathbf A$ given by
$\psi\mapsto e_\psi$ is injective.

Conversely, if $f$ is an idempotent element of $\mathbf A$
then the mapping $\psi_f\colon \mathbf A\longrightarrow\mathbf M$ 
    defined by
    $$\psi_f(x)=\left \{\begin{array}{lll}
        1 &\text{ if } & f\leq x \\
        0 & \text{ if } &f\not\leq x 
    \end{array}\right. $$
is  a product morphism, that is, $\psi_f\in P_{\mathbf{A}}^{\mathbf{M}}$.
Moreover, the smallest element of $\psi_f^{-1}(1)$ is precisely $f$,
that is $e_{\psi_f} = f$, showing that $\nu$ is surjective, hence bijective, as claimed. 
Figure~\ref{F3} shows the $\psi_f$-product for this choice of $\psi_f$. 

    \begin{figure}
        \centering
        \includegraphics[width=0.5\linewidth]{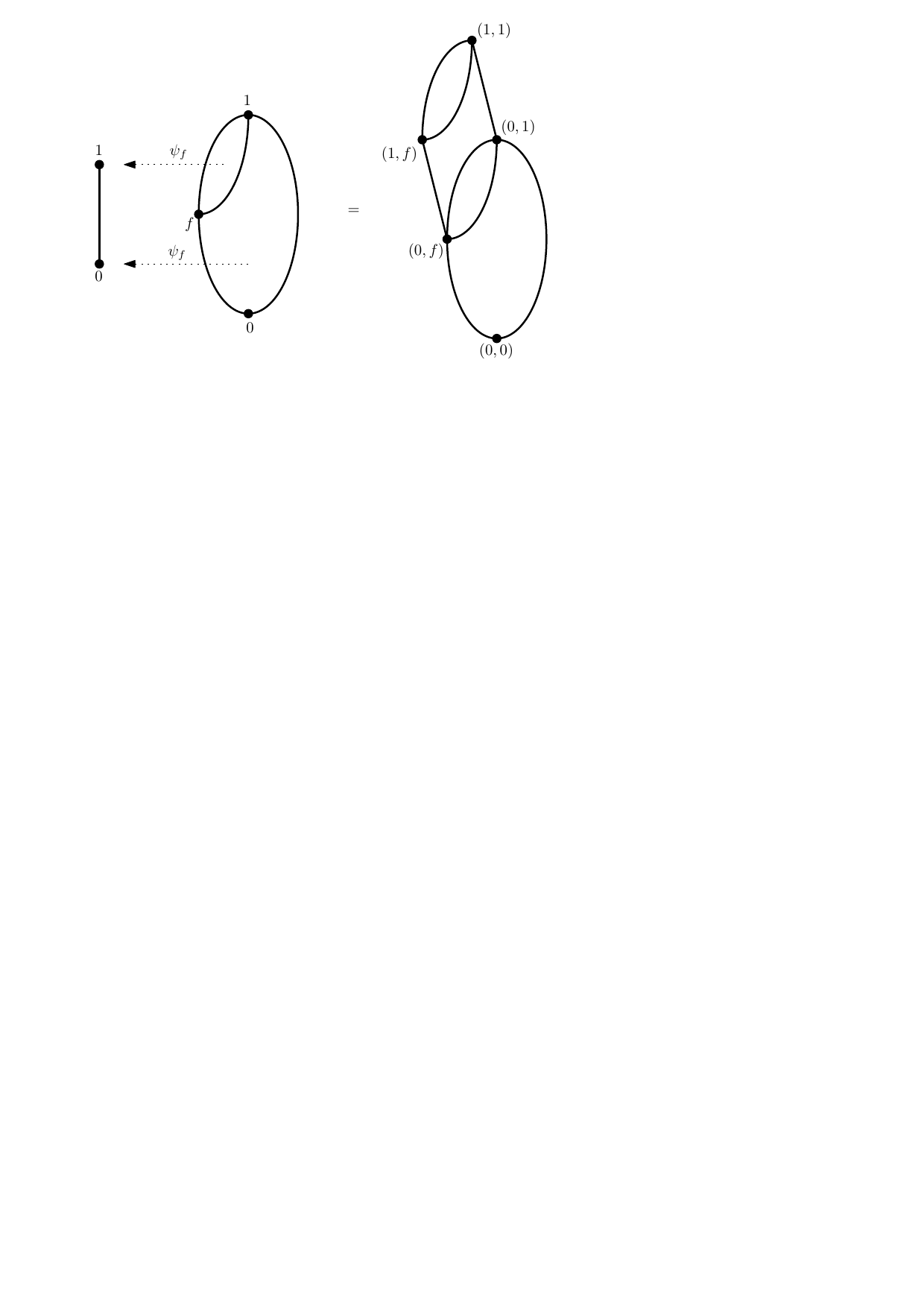}
        \caption{The visualisation of $\mathbf M\ltimes_{\psi_f}\mathbf A.$}
        \label{F3}
    \end{figure}

For the second case, we assign to each $\psi\in P_{\mathbf{M}}^{\mathbf{A}}$
the idempotent element $\psi(0)\in A$, that is, we put
$\mu(\psi) = \psi(0)$. So defined $\mu$ is injective, since for each $x\in M, \psi(x)=\psi(x)^n=\psi(x^n)=\psi(0)$. Conversely,
for $f\in\mathbf{Id}\;(\mathbf A)$, define 
$\psi_f\colon \mathbf M\longrightarrow\mathbf A$ by  
       $$\psi_f(x)=\left \{\begin{array}{lll}
        1 &\text{ if } & x=1, \\
        f & \text{ if } &x\not=1. 
       \end{array}\right. $$
Clearly, $\psi_f\in P_{\mathbf{M}}^{\mathbf{A}}$. Since $\mathbf{M}$ is simple, 
for any $x\in M$ with $x\not=1$ there exists $n\in\mathbb N$ such that
$x^n=0$. Hence $\psi_f(x)=\psi_f(x)^n=\psi_f(x^n)=\psi_f(0)$. Therefore, $\mu$
is surjective, hence bijective, as claimed. Figure~\ref{F4} the $\psi_f$-product for this choice of $\psi_f$.  
    \begin{figure}
        \centering
        \includegraphics[width=0.5\linewidth]{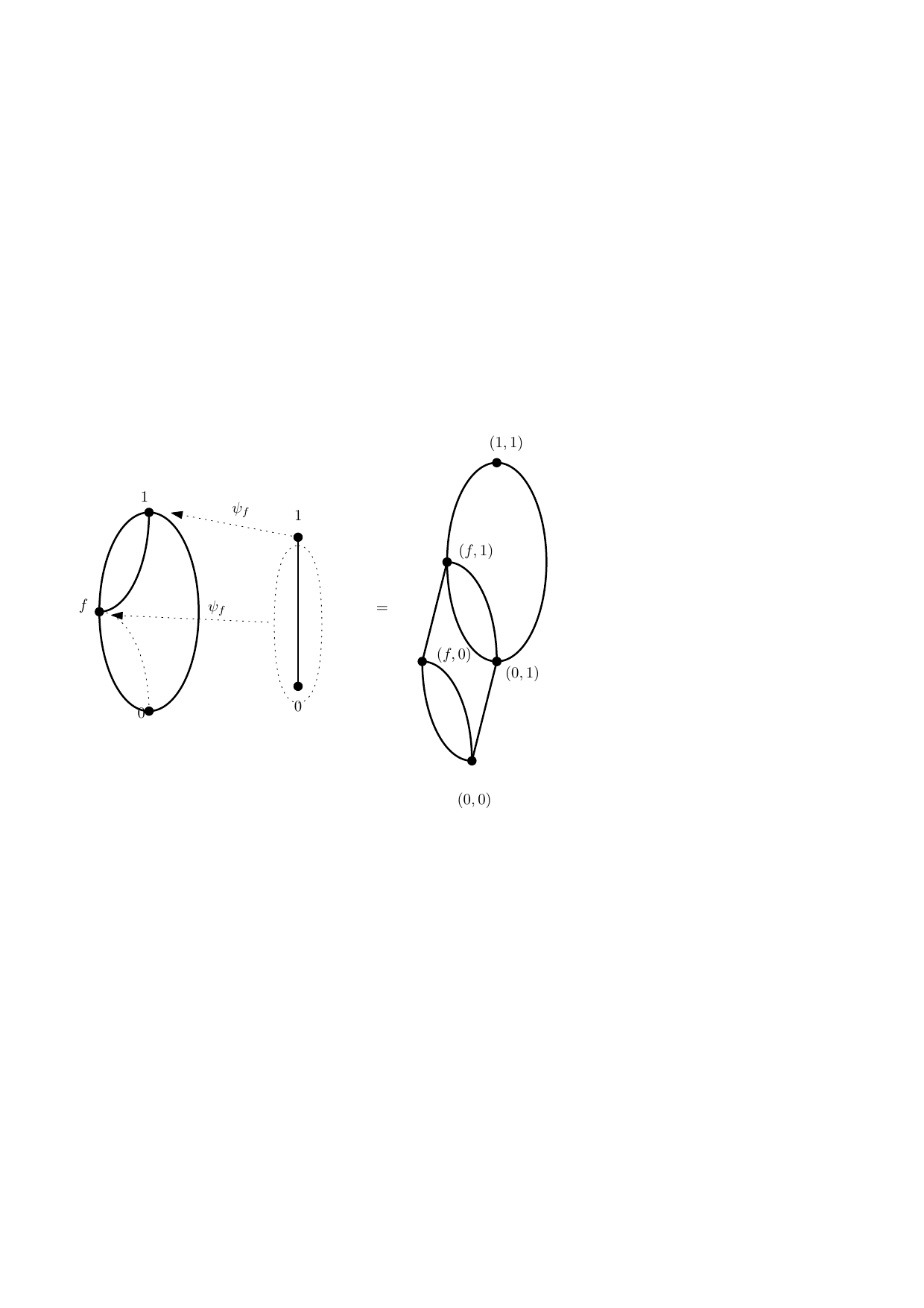}
        \caption{The visualisation of $\mathbf A\ltimes_{\psi_f}\mathbf M.$}
        \label{F4}
    \end{figure}
\end{proof}

\section{Appendix : The relation of $f$-products to exact sequences}

The last part of this article was inspired by a natural question: How to structurally interpret the composition of product morphisms? The answer, somewhat surprisingly, leads to exact sequences. Let's start with a simple observation. If $f\colon \mathbf A\longrightarrow \mathbf B$ is a homomorphism such that $f(\mathbf A)\in \mathbf{Fil}\; \mathbf B$ (let's call such homomorphisms \emph{filter homomorphisms}). Then Theorem \ref{decomp} says that there exists product morphisms $$\psi\colon \mathbf A/\mathrm{Ker}\,f\longrightarrow \mathrm{Ker}\,f$$ and $$\phi\colon \mathbf B/f(\mathbf A)\longrightarrow f(\mathbf A)$$ such that  
$$\mathbf A \cong (\mathrm{Ker}\, f)\ltimes_\psi (\mathbf A/\mathrm{Ker}\, f)$$
and
$$\mathbf B \cong f(\mathbf A)\ltimes_\phi(\mathbf B/f(\mathbf A)).$$
The first isomorphism theorem gives us the natural isomorphism $$\eta\colon \mathbf A/\mathrm{Ker}\,f \longrightarrow f(\mathbf A).$$ Together we get a product morphism $$\gamma\colon \mathbf B/f(\mathbf A)\longrightarrow \mathrm{Ker}\, f$$ that is the composition $\gamma := \psi\circ\eta^{-1}\circ\phi.$ Therefore, we can define the following hoop
$$\mathbf A\bullet_f\mathbf B:= (\mathrm{Ker}\, f)\ltimes_\gamma (\mathbf B/f(\mathbf A)).$$
Several properties can be seen immediately:
\begin{itemize}
    \item[i)] If we denote the (only) homomorphism $i\colon \mathbf 1\longrightarrow \mathbf A$ then we have $$\mathbf 1\bullet_i\mathbf A\cong \mathbf A.$$
    \item[ii)] If we denote the (only) homomorphism $j\colon \mathbf A\longrightarrow \mathbf 1$ then we have $$\mathbf A\bullet_j\mathbf 1\cong \mathbf A.$$
    \item[iii)] If $\mathbf F$ is a filter of the hoop $\mathbf A$ then $$\mathbf F\bullet_\subseteq \mathbf A\cong \mathbf A/F.$$ 
    \item[iv)] If $\mathbf F$ is a filter of the hoop $\mathbf A$ then $$\mathbf A\bullet_{-/F} (\mathbf A/F)\cong \mathbf F.$$ 
    \item[v)] The homomorphism $f\colon \mathbf A\longrightarrow\mathbf B$ is an isomorphism if and only if $$\mathbf A\bullet_f\mathbf B\cong\mathbf 1.$$ 
\end{itemize}

 An \emph{exact sequence of hoops} is a sequence of hoops and homomorphisms
$$\mathbf A_0\xlongrightarrow{f_1} \mathbf A_1\xlongrightarrow{f_2} \mathbf A_2\xlongrightarrow{f_3}\mathbf A_3\xlongrightarrow{f_4}\dots \xlongrightarrow{f_n}\mathbf A_n$$ 
such that we have $f_{i}(\mathbf A_{i})=\mathrm{Ker}\, f_{i+1}$ for arbitrary $i=1,\dots,n-1.$ If $$\mathbf A\xlongrightarrow{f}\mathbf B\xlongrightarrow{g} \mathbf C$$ is an exact sequence such that $g(\mathbf B)\in\mathbf{Fil}\,(\mathbf C)$ then Figure \ref{F5} visualise the situation where mapping $$f'\colon \mathrm{Ker}\,f\ltimes_\alpha f(\mathbf A)\longrightarrow f(\mathbf A)\ltimes_{\beta\circ\gamma}\mathbf C/g(\mathbf B)$$ is defined by $f'(a,b)=(b,1)$ and $$g'\colon \mathrm{Ker}\,f\ltimes_{\alpha\circ\beta}f(\mathbf B)\longrightarrow g(\mathbf B)\ltimes_\gamma \mathbf C/g(\mathbf B)$$ is defined by $g'(a,b)=(b,1),$ It is clear then that we can define the following $$\mathbf A\bullet_f\mathbf B\bullet_g\mathbf C:=\mathbf A\bullet_{f'}(\mathbf B\bullet_g\mathbf C)\cong(\mathbf A\bullet_f\mathbf B)\bullet_{g'}\mathbf C\cong\mathrm{Ker}\,f\ltimes_{\alpha\circ\beta\circ\gamma}\mathbf C/g(\mathbf B).$$ A deeper study of the concepts introduced in this appendix is beyond the scope of this article.
\begin{figure}
    \centering
    \includegraphics[width=0.9\linewidth]{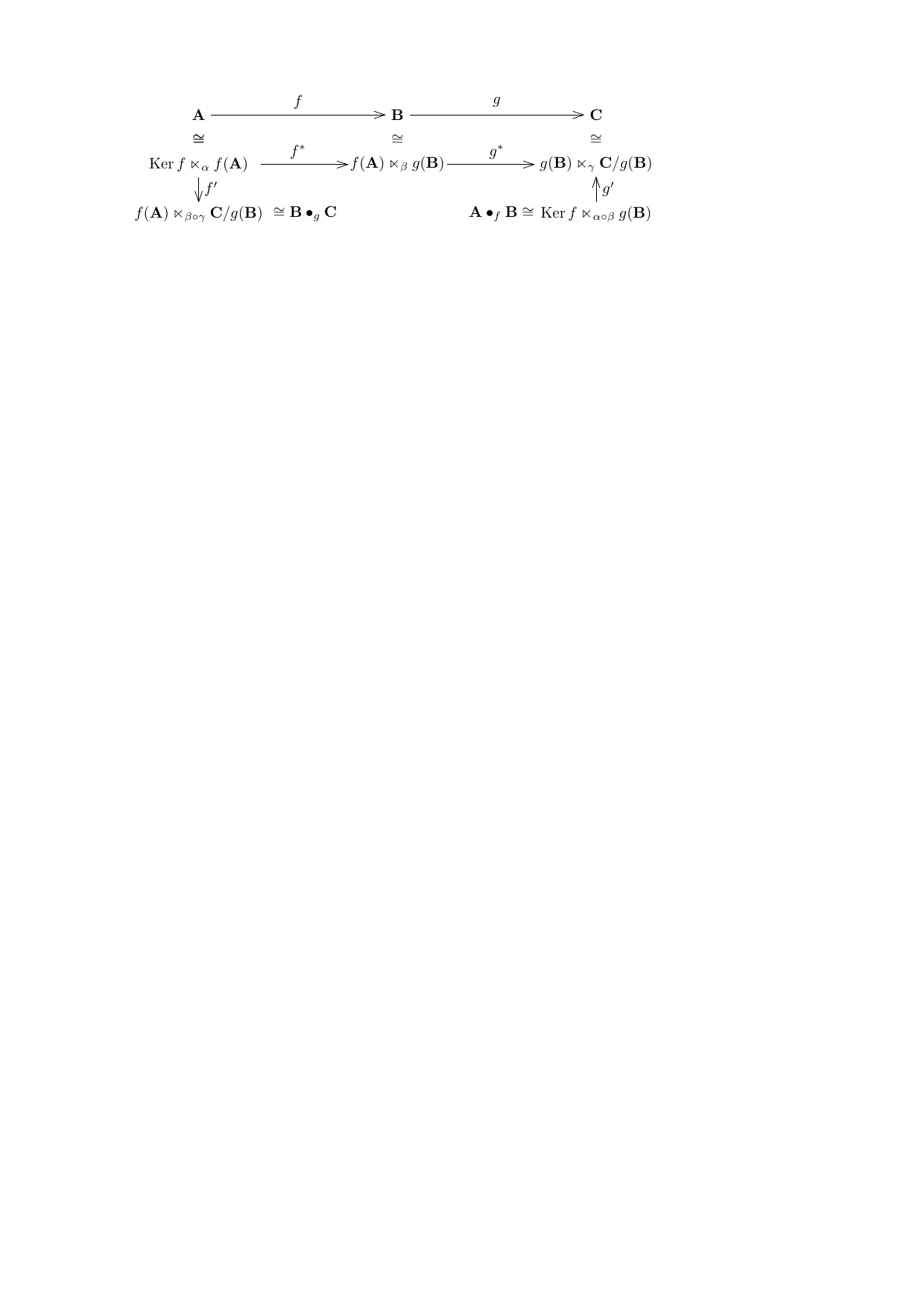}
    \caption{The composition of exact sequences}
    \label{F5}
\end{figure}

\section{Open problems}

In further research we want to focus on studying the uniqueness of existing decompositions and on studying homomorphisms of hoops determined by their decompositions. The question is also whether the introduced concepts and ideas can be extended to broader classes of residuated lattices.

\bibliographystyle{amsplain}
\bibliography{refer.bib}
\end{document}